\numberwithin{equation}{section}
\newcommand{\be}{\begin{eqnarray}}
\newcommand{\mE}{\end{eqnarray}}
\newcommand{\ce}{\begin{eqnarray*}}
\newcommand{\de}{\end{eqnarray*}}
\newtheorem{theorem}{Theorem}[section]
\newtheorem{lemma}[theorem]{Lemma}
\newtheorem{proposition}[theorem]{Proposition}
\newtheorem{corollary}[theorem]{Corollary}
\theoremstyle{definition}
\newtheorem{definition}[theorem]{Definition}
\newtheorem{remark}[theorem]{Remark}
\newtheorem{example}[theorem]{Example}
\newtheorem{assumption}[theorem]{Assumption}
\def\p{\partial}
\def\R{\mathbb{R}}
\def\[{{\Big[}}
\def\]{{\Big]}}
\def\<{{\langle}}
\def\>{{\rangle}}
\def\({{\Big(}}
\def\){{\Big)}}
\def\bx{{\mathbf{x}}}
\def\dif{{\mathord{{\rm d}}}}
\def\={&\!\!=\!\!&}
\def\bt{\begin{theorem}}
\def\et{\end{theorem}}
\def\bl{\begin{lemma}}
\def\el{\end{lemma}}
\def\br{\begin{remark}}
\def\er{\end{remark}}
\def\bd{\begin{definition}}
\def\ed{\end{definition}}
\def\bp{\begin{proposition}}
\def\ep{\end{proposition}}
\def\bc{\begin{corollary}}
\def\ec{\end{corollary}}
\def\bx{\begin{example}}
\def\ex{\end{example}}
\def\cB{{\mathcal B}}
\def\cF{{\mathcal F}}
\def\cX{{\mathcal X}}
\def\mE{{\mathbb E}}
\def\mH{{\mathbb H}}
\def\mL{{\mathbb L}}
\def\mN{{\mathbb N}}
\def\mP{{\mathbb P}}
\def\mR{{\mathbb R}}
\def\mY{{\mathbb Y}}
\def\sH{{\mathscr H}}
\def\geq{\geqslant}
\def\leq{\leqslant}
\begin{document}
	\title{The perfection of local semi-flows and local random dynamical systems with applications to
	SDEs}
	
	\date{}

\author{Chengcheng Ling,  Michael Scheutzow and Isabell Vorkastner}

\address{Chengcheng Ling:
Technische Universit\"at Berlin,
Fakult\"at II, Institut f\"ur Mathematik,
10623 Berlin, Germany
\\
Email: ling@math.tu-berlin.de
 }

\address{Michael Scheutzow:
Technische Universit\"at Berlin,
Fakult\"at II, Institut f\"ur Mathematik,
10623 Berlin, Germany
\\
Email: ms@math.tu-berlin.de
}

\address{Isabell Vorkastner:
Technische Universit\"at Berlin,
Fakult\"at II, Institut f\"ur Mathematik,
10623 Berlin, Germany
\\
Email: vorkastn@math.tu-berlin.de
}	
\begin{abstract}

We provide a rather general perfection result for crude local  semi-flows taking values in a Polish space showing that a crude semi-flow has a modification which is  a (perfect) local semi-flow which is invariant under a suitable metric dynamical system. Such a (local) semi-flow induces a (local) random dynamical system.    
Then  we show that this result can be applied to several classes of stochastic differential equations
driven by semimartingales with stationary increments such as equations with locally monotone coefficients and equations with singular drift. For these examples it was previously unknown whether they generate a 
(local) random dynamical system or not.
		\bigskip
		

		\noindent{{\bf Keywords:} (local) semi-flow, (local) random dynamical system, cocycle, perfection,  stochastic differential equation,  singular SDE, Brownian motion}
	\end{abstract}

	\maketitle
	
\section{Introduction}	In this paper, we investigate the relation between (local) semi-flows and (local) random dynamical systems. 
Let us start with a very simple set-up (without measurability or topological assumptions) to illuminate this relation. To ease notation, we define
$$
\Delta:=\{(s,t)\in \R^2:\,0 \leq s \leq t\}.
$$
Let $\Omega$ be a non-empty set, $(H,\circ)$ a semi-group with identity element $e_H$ and let $\theta_t$, $ t\in \R$ be a family of 
maps from $\Omega$ to itself such that $\theta_0=\mathrm{id}$ and $\theta_{t+s}=\theta_t\theta_s$ for $s,t \in \R$. Let $\phi:\Delta \times \Omega \to H$ and $\varphi:[0,\infty) \times \Omega \to H$ be maps
and consider the following properties which $\phi$ and $\varphi$ may or may not satisfy.
\begin{itemize}
    \item [(i)] $\phi_{s,u}(\omega)=\phi_{t,u}(\omega)\circ \phi_{s,t}(\omega)$ for all $0\leq s \leq t \leq u$, $\omega \in \Omega$,
    \item [(ii)] $\phi_{s,s}(\omega)=e_H$ for all $s \geq 0$, $\omega \in \Omega$,
    \item [(iii)] $\phi_{s,s+t}(\omega) = \phi_{0,t}(\theta_s\omega)$,  $s, t \geq 0$, $\omega \in \Omega$,
    \item [(iv)] $\varphi_{t+s}(\omega)= \varphi_{t}(\theta_s\omega) \circ \varphi_{s}(\omega)$, $s,t \geq 0$, 
    $\omega \in \Omega$,
    \item [(v)] $\varphi_0(\omega)=e_H$, $\omega \in \Omega$.
\end{itemize}
Then it is straightforward to check that if $\phi$ satisfies (i), (ii) and (iii), then 
$\varphi_t(\omega):=\phi_{0,t}(\omega)$, $t \geq 0$ satisfies (iv) and (v). Conversely, if 
$\varphi$ satisfies (iv) and (v), then $\phi_{s,t}(\omega):=\varphi_{t-s}(\theta_s\omega)$, $(s,t) \in \Delta$ satisfies (i), (ii) and (iii). Properties (iv) and (v) are often referred to as the 
{\em cocycle property} and the relation just described shows that there is a one-to-one correspondence between semi-flows $\phi$ (satisfying (i), (ii) and (iii)) and cocycles $\varphi$. 

During the past decades, random dynamical systems (RDS), introduced by L.~Arnold (\cite{A}), have been studied in great  detail. Many examples of  RDS are generated by stochastic differential equations (SDEs) and it is therefore of interest to show that large classes of SDEs actually generate an RDS. By this we mean that the SDE has a solution map $\phi$ which enjoys properties (i), (ii) and (iii) and hence generates a cocycle $\varphi$ as outlined above (and satisfies further properties). We will provide precise definitions at the beginning of the following section.

If one wants to verify that an SDE (possibly infinite dimensional, e.g.~an SPDE or a stochastic delay equation) generates a semi-flow $\phi$, then the problem comes up that in most cases solutions of an SDE are unique only up to sets of measure 0 and therefore, a solution map $\phi$ 
will typically satisfy properties (i)-(iii) only up to sets of measure 0 and the exceptional sets 
may depend on $s,t,u$ and the initial condition $x$. The first issue is to find a modification of the 
map $\phi$ which satisfies at least properties (i) and (ii) and which is continuous with respect to the initial condition $x$. In many cases such a modification can be found using Kolmogorov's continuity theorem \cite{K, SS}.  If the SDE has time homogeneous coefficients, $\theta$ is a metric dynamical system (see Definition \ref{MDS})  and the SDE is driven by a process with stationary increments which is represented via $\theta$, then it is usually not hard to show that property (iii) holds up to a set of measure 0 which may however depend on $s$ (one can usually get rid of the dependence of the exceptional null sets on $t$ by (right) continuity properties of $t \mapsto \phi_{s,t}$).  An important task is then to show that there exists a modification of $\phi$ such that (iii) holds without exceptional null sets.
We will provide a {\em perfection} statement of this kind in Proposition \ref{Proposition_perfection_theorem}. We will formulate the result in such a way that it also applies to 
local semi-flows, i.e.~semi-flows which are only defined up to a random time, e.g.~the blow-up time of 
the SDE. Instead of performing the perfection on the level of semi-flows, one could also first 
go from a crude semi-flow to a crude cocycle (defined like $\varphi$ above) and then find a modification 
of $\varphi$ which is a (perfect) cocycle. This approach was taken in \cite{AS,MoS,MS}. Here we follow 
the approach in \cite{KS} by perfecting a crude semi-flow. Our result is more general than \cite{KS} 
since we also cover local semi-flows which require a different treatment at some places.

The paper is organized as follows. We first introduce the basic concepts. Then we formulate the mentioned perfection result for crude local semi-flows and show how a crude semi-flow generates a local RDS (Theorem \ref{existence_RDS_flow}). In Section 3 we provide conditions on the semi-flow generated by an SDE on $\R^d$ which guarantee that it generates a local RDS and finally, in Section 4, we provide two classes of examples: SDEs with locally monotone coefficients and singular SDEs on $\R^d$ both driven by
Brownian motion.

\section{Main results}
We start by defining the concepts of (local) semi-flows and (local) RDS. 
We will always assume that the state space is a Polish space $X$, i.e.~a separable completely metrizable topological space and we denote its Borel $\sigma$-field 
by $\cX$. In the following, all random processes will be defined on a given probability space
$(\Omega,\cF,\mP)$.
\begin{definition}
    A local semi-flow $(\phi, \Theta)$
    on a Polish space $X$ consists of two  measurable maps
    \begin{align*}
        \Theta : [0,\infty) \times X \times \Omega \rightarrow (0, \infty]
    \end{align*}
    and
    \begin{align*}
        \phi: \left\{ (s,t,x,\omega) \in [0, \infty)^2\times X \times \Omega :  s \leq t < \Theta (s,x,\omega) \right\} \rightarrow X
    \end{align*}
    such that, for each $\omega \in \Omega$,
    \begin{enumerate}
        \item[(i)] $(s,x) \mapsto \Theta(s,x)$ is lower semi-continuous, 
        \item[(ii)] $s \mapsto \phi_{s,t}(x)$ is right continuous on $[0,t)$ for every $x \in X$ and $0 \leq t < \Theta (s,x) $, 
        \item[(iii)] $t \mapsto \phi_{s,t}(x)$ is right continuous on $[s,\Theta(s,x))$ for every $x \in X$ and $s \geq 0$, 
        \item[(iv)] for all $0\leq s \leq t < u$ and $x \in X$, we have $u < \Theta(s,x)$ iff both $t< \Theta(s,x)$ and $u<\Theta(t,\phi_{s,t}(x))$.
        In this case the following identity holds
        \begin{align*}
            \phi_{s,u}(x) = \phi_{t,u} (\phi_{s,t} (x)).
        \end{align*}
        \item[(v)] $\phi_{s,s} (x) = x$ for all $x \in X$ and $s\geq 0$.
    \end{enumerate}
    We call $(\phi,\Theta)$ a {\em continuous local semi-flow} if, in addition, 
    $$
    x \mapsto \phi_{s,t}(x) \mbox{ is continuous for every } 0 \leq s \leq t < \Theta (s,x). 
    $$
    We call $(\phi,\Theta)$ a (\emph{global}) \emph{semi-flow} if $\Theta(s,x,\omega)= \infty$ for all $s\geq 0, x \in X$ and $\omega \in \Omega$.
\end{definition}

Next, we define the concepts of a metric dynamical system and a (local) random dynamical system.

\begin{definition}\label{MDS}
A  \emph{metric dynamical system} (MDS for short) 
$\theta=(\Omega,\mathcal{F},\mP,\{\theta_t\}_{t\in\mathbb{R}})$ is a
probability space $(\Omega,\mathcal{F},\mP)$ with a family of measure preserving transformations $\{\theta_t:\Omega\rightarrow\Omega,t\in\mathbb{R}\}$ such that
     \begin{itemize}
     \item[(1)] $\theta_0=\mathrm{id}, \theta_t\circ\theta_s=\theta_{t+s}$ for all $t,s\in\mathbb{R}$;
     \item[(2)] the map $(t,\omega)\mapsto \theta_t\omega$ is measurable and $\theta_t\mP=\mP$ for all $t\in\mathbb{R}$.
     \end{itemize}
 \end{definition}

 \begin{definition}[local RDS]
 A \emph{local random dynamical system} (RDS)  $(\theta, \varphi,\tau)$
    on a Polish space $X$ consists of an MDS $\theta$ and measurable maps $\tau: X \times \Omega \to (0,\infty]$ and 
    $$\varphi:\left\{ (s,x,\omega)\in [0,\infty) \times X \times \Omega: s < \tau(x,\omega)  \right\}\rightarrow X$$
    such that, for each $\omega \in \Omega$, 
    \begin{itemize}
     \item[(1)] $x \mapsto \tau(x)$ is lower semi-continuous,
     \item[(2)] $t \mapsto \varphi_t(x)$ is right continuous on $[0,\tau(x))$ for every $x \in X$,
     \item[(3)] $\tau(\varphi_s(x,\omega),\theta_s\omega)+s=\tau(x,\omega)$ whenever $s < \tau(x,\omega)$,
     \item[(4)] $\varphi$ satisfies the following (perfect) {\em cocycle property}:
     \begin{align}\label{pcocloc}\varphi_0(.,\omega)=\mathrm{id},\quad \varphi_{t+s}(x,\omega)=\varphi_t(\varphi_s(x,\omega),\theta_s\omega)
     \end{align}
     for all $t,s\geq0$, $x \in X$, and $\omega\in\Omega$ for which $t+s < \tau(x,\omega)$. \\
     
     \end{itemize}
         We call $(\theta,\varphi,\tau)$ a {\em continuous local RDS} if, in addition, for each $t \ge 0$,
    $$
    x \mapsto \varphi_{t}(x) \mbox{ is continuous on } \left\{\tau (x)>t\right\}. 
    $$
    We call $(\theta,\varphi,\tau)$ a (\emph{global}) \emph{RDS} if $\tau(x,\omega)= \infty$ for all $x \in X$ and $\omega \in \Omega$.
 \end{definition}
 
 \begin{remark}
 If $(\theta, \varphi,\tau)$ is a local RDS, then we define
 $$D(\omega):=\left\{(t,x)\in [0,\infty)\times X: t < \tau(x,\omega) \right\}$$
and, for $x \in X$,
$$
D(x,\omega):=\left\{t \ge 0:(t,x) \in D(\omega)\right\}. 
$$
It is easy to check that $D(\omega)$ is an open subset of $[0,\infty) \times X$, 
$D(x,\omega)$ is an open interval of $[0,\infty)$ which contains 0 and that 
$D:=\left\{(t,x,\omega)\in [0,\infty)\times X \times \Omega: (t,x)\in D(\omega) \right\}$ is measurable. 
Further, for each $x,\omega,s$ we have
$$
D(x,\omega)= D(\varphi_s(x,\omega),\theta_s\omega)+s.
$$

Conversely, one can start from a set $D$ with these properties and define $\tau(x,\omega):=\inf\{t \ge 0: t \notin D(x,\omega)\}$. We omit further details and refer the reader to \cite[p.11]{A} for this approach to define local RDS, see also \cite[p.513]{FGS}.
\end{remark}
 
For a Polish space $X$, we define $\bar{X}:=X \cup \{\partial\}$ as the disjoint union of $X$ and an additional point $\partial$ (sometimes called {\em coffin state}). Then $\bar{X}$ is also a Polish space. For $f:(0,\infty) \to \bar{X}$ and $x \in \bar{X}$ we write $x=\text{ess}\lim_{h \downarrow 0}f(h)$ if 
there exist $\varepsilon>0$ and a Lebesgue null set $N\subset (0,\varepsilon]$ such that 
$x=\lim_{h \downarrow 0,h \notin N}f(h)$ (see \cite[p.105]{DM} for a similar definition).
For a probability measure $\mP$ on $(\Omega,\cF)$ we denote the corresponding inner measure on 
$\Omega$ by $\mP_*$.

The following proposition and its proof are similar to \cite[Theorem 4]{KS} which is, however,  formulated for $X$-valued processes $\phi$. Here, the state space is $\bar{X}$ and in  general the 
map $s \mapsto \phi_{s,t}(x,\omega)$ will not be right continuous at $s \in [0,t)$ when  $\phi_{s,t}(x,\omega)=\partial$, so assumption (iv') in  \cite[Theorem 4]{KS} will not hold in that case.
Note that this problem cannot be overcome by equipping $\bar{X}$ with the Alexandrov's one-point compactification even if $X$ is locally compact.
\begin{proposition}
    \label{Proposition_perfection_theorem}
    Let $X$ be a Polish space, let $(\Omega,\mathcal{F},\mathbb{P},\left\{\theta_t\right\}_{t \in \R})$ be a metric dynamical system and define $\bar{X}$ as above.
    Assume that 
    $\phi : \Delta  \times \bar{X} \times \Omega \rightarrow \bar{X}$ is a measurable map satisfying
    \begin{enumerate}
        \item[(i)] $\partial$ is an absorbing state, i.e. $\phi_{s,t}(\partial,\omega) = \partial$ for all $0\leq s \leq t$, $\omega \in \Omega$,
        \item[(ii)] $\phi_{s,u} (x,\omega) = \phi_{t,u} ( \phi_{s,t}(x,\omega), \omega)$ for all $x \in \bar{X}, \omega \in \Omega, 0 \leq s \leq t \leq u $,
        \item[(iii)] for every $s \geq 0 $ there exists a null set $N_s$ such that
        \begin{align*}
            \phi_{s,t} (x,\omega) = \phi_{0,t-s} (x,\theta_s \omega)
        \end{align*}
        for all $x \in \bar{X}, \omega \notin N_s$ and $t \geq s$,
        \item[(iv)] for every $t > 0, \omega \in \Omega, x \in X$ and $s_0\in [0,t)$, the map $s \mapsto \phi_{s,t}(x,\omega)$ is right continuous at $s_0$ if $\phi_{s_0,t}(x,\omega) \in X$,
        \item[(v)] $t \mapsto \phi_{s,t}(x,\omega)$ is right continuous on $[s, \infty)$ for every $s \geq 0, \omega \in \Omega, x \in \bar{X}$.
    \end{enumerate}
    Then, there exists a measurable map  $\tilde{\phi} : \Delta \times \Omega \times \bar{X} \rightarrow \bar{X}$ which satisfies (i), (ii), (iv), (v) and
    \begin{enumerate}
        \item [(iii')] 
        \begin{align*}
            \tilde{\phi}_{s,t} (x,\omega) = \tilde{\phi}_{0,t-s} (x,\theta_s \omega)
        \end{align*}
        for all $x \in \bar{X}, \omega \in \Omega$ and $t \geq s \geq 0$.
        \item [(vi)] For each $s\geq 0$, the processes $\tilde {\phi}_{s,.}(.)$ and  $\phi_{s,.}(.)$ are indistinguishable, i.e.~$\mP_* ( \tilde{\phi}_{s,t} (x) = \phi_{s,t} (x) \text{ for all } t \ge 0 \text{ and } x \in \bar{X}) =1  $. 
        \item [(vii)] If $\phi_{s,s}(x,\omega)=x$ for all $s\geq 0, x \in \bar{X}, \omega \in \Omega$, then the same holds for  $\tilde{\phi}$.
        \item [(viii)] If $\phi_{s,t}(x,\omega) \in X$   for all $(s,t) \in \Delta, x \in X, \omega \in \Omega$, then the same holds for  $\tilde{\phi}$.
        \item [(ix)] If $\phi_{s,t}(.)$ is continuous in $s$ and/or $t$ for every choice of the other variables, then the same holds for $\tilde{\phi}$.
        \item[(x)] If, for some $x_0 \in X$,   $x \mapsto \phi_{s,t}(x,\omega) $ is continuous at $x_0$ for every $(s,t) \in \Delta, \omega \in \Omega$, then the same holds for $\tilde{\phi}$.
    \end{enumerate}
\end{proposition}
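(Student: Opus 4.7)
The plan is to adapt the perfection scheme of \cite{KS} to the coffin state $\partial$ by defining
$$
\tilde\phi_{s,t}(x,\omega) := \text{ess}\lim_{h\downarrow 0} \phi_{0,\,t-s-h}(x,\theta_{s+h}\omega)
$$
whenever $s<t$ and the essential limit exists, together with $\tilde\phi_{s,s}(x,\omega):=x$ for $s\ge 0$ and $\tilde\phi_{s,t}(x,\omega):=\partial$ (for $x\in X$, $s<t$) on the exceptional set; on $x=\partial$ set $\tilde\phi\equiv\partial$. Because $\theta_{s+h}=\theta_h\circ\theta_s$, the identity $\tilde\phi_{s,t}(x,\omega)=\tilde\phi_{0,t-s}(x,\theta_s\omega)$ is baked into the formula, so (iii') holds for every $\omega$. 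The absorbing property (i) is preserved since $\phi_{0,\cdot}(\partial,\cdot)\equiv\partial$, and the default for $s=t$ secures (vii).

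Next I would locate a $\theta$-invariant full-measure set $\Omega_0$ on which the essential limit exists and carries the expected value. Replace each $N_s$ from (iii) by the canonical measurable set
$$
\widehat N_s := \{\omega\in\Omega : \exists\, t\ge s,\ x\in\bar X,\ \phi_{s,t}(x,\omega)\ne\phi_{0,t-s}(x,\theta_s\omega)\},
$$
which is still $\mP$-null. The set $\{(s,\omega):\omega\in\widehat N_s\}$ is jointly measurable with null $s$-sections, so Fubini supplies a set of full measure outside of which $\{s\ge 0:\omega\in\widehat N_s\}$ has Lebesgue measure zero. Intersecting with the $\theta_q$-preimages for $q\in\mQ$ and using measure preservation and the cocycle of $\theta$, one produces a $\theta$-invariant $\Omega_0$ of full measure on which, for every $s\ge 0$, the set $\{h>0:\omega\in\widehat N_{s+h}\}$ is Lebesgue-null in a neighborhood of $0$. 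On $\Omega_0$ the defining essential limit reduces via (iii) to $\text{ess}\lim_{h\downarrow 0}\phi_{s+h,t}(x,\omega)$, and the right continuity (iv) shows this coincides with $\phi_{s,t}(x,\omega)$ whenever that value lies in $X$. This already delivers (vi) and (viii); properties (ix) and (x) follow by direct inspection of the formula since it only involves evaluations of $\phi$ at shifted arguments.

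The main obstacle is the perfect flow identity (ii). Using (iii') to reduce to $s=0$, with $\omega'=\theta_s\omega$, $a=t-s$, $b=u-t$, it suffices to verify
$$
\tilde\phi_{0,a+b}(x,\omega') = \tilde\phi_{0,b}\bigl(\tilde\phi_{0,a}(x,\omega'),\theta_a\omega'\bigr)
$$
for all $a,b\ge 0$, $x\in\bar X$, $\omega'\in\Omega_0$. Applying (ii) for $\phi$ inside the essential limit on the left yields $\phi_{0,a+b-h}(x,\theta_h\omega')=\phi_{a,a+b-h}(\phi_{0,a}(x,\theta_h\omega'),\theta_h\omega')$, and then (iii) at shift $a$ with base $\theta_h\omega'$, valid for a.e.\ $h$ near $0$ by a Fubini argument combined with measure preservation of $\theta_h$ on $\Omega_0$, converts the right factor to $\phi_{0,b-h}(\phi_{0,a}(x,\theta_h\omega'),\theta_{a+h}\omega')$. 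The remaining task is to identify $\phi_{0,a}(x,\theta_h\omega')$ with $\tilde\phi_{0,a}(x,\omega')$ within the outer essential limit. The genuinely new difficulty relative to \cite{KS}, flagged in the paragraph before the statement, is that $s\mapsto\phi_{s,t}(x,\omega)$ is not right continuous at $s_0$ when $\phi_{s_0,t}(x,\omega)=\partial$, so this identification cannot be achieved by a pointwise right-continuity argument. I would therefore split into cases: when $\tilde\phi_{0,a}(x,\omega')\in X$, the right continuity (iv) applies and both essential limits collapse to the same value; when $\tilde\phi_{0,a}(x,\omega')=\partial$, the absorbing property (i) forces both compositions to $\partial$ as well, once one verifies that the inner argument $\phi_{0,a}(x,\theta_h\omega')$ also reaches $\partial$ for $h$ in a set of full measure near $0$. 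This coffin-state analysis, together with the elementary algebra of the essential limit under null perturbations of $h$, is the heart of the proof.
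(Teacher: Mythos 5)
Your overall strategy (the essential limit of $\phi_{0,t-s-h}(x,\theta_{s+h}\omega)$ as $h\downarrow 0$, plus a Fubini argument producing an invariant full-measure set) is the same as the paper's, but there is a genuine gap exactly at the point you yourself flag as ``the heart of the proof'', and it traces back to your definition of $\tilde\phi$. You assign the value $\partial$ only where the essential limit fails to exist. The problematic case, however, is when $\phi_{s,t}(x,\omega)=\partial$ while $\text{ess}\lim_{h \downarrow 0}\phi_{s+h,t}(x,\omega)$ exists and lies in $X$; nothing in (i)--(v) rules this out, since the maps $h\mapsto\phi_{s+h,t}(x,\omega)$ start afresh from $x$ at time $s+h$ and need not see the death of the solution started at time $s$ (this is precisely why assumption (iv) is only required at points where the value lies in $X$). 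In that case your $\tilde\phi_{s,t}(x,\omega)$ lands in $X$, so indistinguishability (vi) fails, and the cocycle verification of (ii) through the coffin state cannot be completed: in your setup the statement ``$\tilde\phi_{0,a}(x,\omega')=\partial$'' only records non-existence of a limit and carries no information about $\phi_{0,a}(x,\theta_h\omega')$ equalling $\partial$ for a.e.\ small $h$, which is exactly the fact you say ``one verifies'' but never verify. The paper resolves this by building the needed information into the definition: it takes the essential limit only when $\lambda\{r\in(-s,\infty):\phi_{s+r,s+r+t}(x,\theta_{-r}\omega)\in X\}>0$ and sets $\tilde\phi_{s,s+t}(x,\omega)=\partial$ otherwise, and then proves the representation $\tilde\phi_{s,s+t}(x,\omega)=\phi_{s+r,s+r+t}(x,\theta_{-r}\omega)$ for a.a.\ $r$, i.e.\ \eqref{proof_proposition_nullset}; all of (ii), (iii'), (iv)--(x) are then read off from that single identity rather than from separate limit manipulations.

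There are also secondary gaps. Your $\Omega_0$ is made invariant only under $\theta_q$ for $q\in\mQ$, which does not give (iii') at irrational shifts; the paper's $\tilde\Omega$ is instead defined by a two-parameter a.e.\ condition that is manifestly invariant under every $\theta_u$, $u\in\R$. Setting $\tilde\phi_{s,s}(x,\omega):=x$ unconditionally sacrifices right continuity in $t$ at $t=s$ (conclusion (v)) and the $t=s$ part of (vi) whenever $\phi_{s,s}(x,\omega)\neq x$; the paper defines $\tilde\phi_{s,s}$ as $\lim_{h\downarrow 0}\tilde\phi_{s,s+h}$ on $\tilde\Omega$ for this reason. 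Finally, measurability of $\tilde\phi$ is not addressed at all; it requires an argument (the paper embeds $\bar X$ measurably into $[0,1]$ and rewrites $\tilde\phi$ as an integral over the shift parameter $r$), since an essential limit taken pointwise does not come with measurability for free.
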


\begin{proof}
    Define 
    \begin{align*}
        M = \left\{ (s,\omega) \in [0,\infty) \times \Omega : \phi_{s,s+t}(x,\omega) = \phi_{0,t}(x,\theta_s \omega) \text{ for all } x \in \bar{X}, t \geq 0 \right\}.
    \end{align*}
    The complement of $M$ in $[0, \infty ) \times \Omega$ is the projection of the set
    \begin{align*}
        A = \left\{(s, \omega, t, x) : \phi_{s,s+t} (x,\omega) \neq \phi_{0,t} (x,\theta_s \omega) \right\} \in \cB ([0, \infty)) \otimes \cF \otimes  \cB ([0, \infty)) \otimes \cB ( \bar{X})
    \end{align*}
    onto the first two components. By the projection theorem \cite[Proposition 8.4.4]{C},
    $M$ is measurable w.r.t. the completion of $\cB ([0, \infty)) \otimes \cF$ w.r.t. $\lambda \otimes \mP$, where $\lambda$ denotes Lebesgue measure on $[0,\infty)$. Hence, there exist sets $M_1,M_2 \in \cB ([0, \infty)) \otimes \cF$ such that $M_1 \subset M \subset M_2$ and $\lambda \otimes \mP (M_2 \setminus M_1) =0$. By assumption (iii) $M_2$ has full measure and therefore $M_1$ as well. \\
    Define further
    \begin{align*}
        \tilde{\Omega}= \left\{ \omega \in \Omega : (s+r, \theta_{-r} \omega) \in M_1 \text{ for } \lambda \otimes \lambda \text{ -a.a } (s,r)\in \mR ^2 \text{ such that } s+r \geq 0\right\}.
    \end{align*}
    Invariance of $\theta$ under $\mP$ and Fubini's theorem imply $\tilde{\Omega} \in \cF$ and $\mP (\tilde{\Omega}) =1$. Note that $\tilde{\Omega}$ is invariant under $\theta_u$ for every $u \in \R$. \\
    Define for $s \geq 0,\, t>0$
\begin{align*}
        \tilde{\phi}_{s,s+t}(x,\omega) = 
        \begin{cases}
            \text{ess}\lim_{h \downarrow 0}&\hspace{-.3cm} \phi_{0,t-h} (x,\theta_{s+h}\omega), 
               \\ & \omega \in \tilde{\Omega} \text{ and } 
                \lambda ( r \in (-s, \infty) :
                \phi_{s+r,s+r+t} (x,\theta_{-r} \omega) \in X  ) > 0 \\
            \partial, & \omega \in \tilde{\Omega} \text{ and }  \lambda ( r \in (-s, \infty) :
                \phi_{s+r,s+r+t} (x,\theta_{-r} \omega) \in X  ) = 0 \\
            x, & \omega \notin \tilde{\Omega}.
        \end{cases}
    \end{align*}    
    To see that the essential limit exists, observe that if $s,h \ge 0$, $r \geq -s$, $(s+h+r, \theta_{-r} \omega) \in M$, then 
    \begin{align}
        \label{proof_proposition_esslim}
        \phi_{0, t-h} (x,\theta_{s+h} \omega) = \phi_{s+h+r,s+t+r} (x,\theta_{-r} \omega)
    \end{align}
    for every $x \in \bar{X}$ and $t\ge h$.
    For fixed $s \geq 0$ and $\omega \in \tilde{\Omega}$, equation \eqref{proof_proposition_esslim} holds for  $\lambda \otimes \lambda $-a.a. $(r,h) \in (-s,\infty) \times [0,\infty)$ and all
    $x \in \bar{X}$ and $t\ge h$. Fix $s\geq 0$, $t>0$, $x \in \bar{X}$ and assume that the first case of the definition of $\tilde{\phi}$ applies. Then there exists an $r \in (-s, \infty)$ such that equation \eqref{proof_proposition_esslim} holds for $\lambda$-a.a. $h \in [0,t]$ and 
    $\phi_{s+r,s+t+r} (x,\theta_{-r} \omega) \in X$. Assumption (iv) then implies the existence of the essential limit and also that 
    \begin{align}
        \label{proof_proposition_nullset}
        \tilde{\phi}_{s,s+t} (x,\omega) = \phi_{s+r,s+r+t} (x,\theta_{-r}\omega)
    \end{align}
    for all $s\geq 0,t>0, \omega \in \tilde{\Omega}$  and $\lambda$-a.a. $r \in (-s,\infty)$ where the exceptional null set may depend on $s,t,x$ and $\omega$. If the second case of the definition of $\tilde{\phi}$ applies, then \eqref{proof_proposition_nullset} holds as well (both sides are $\partial)$. 
    Finally, we define
    \begin{align*}
        \tilde{\phi}_{s,s} (x,\omega) = 
        \begin{cases}
            \lim_{h \downarrow 0} \tilde{\phi}_{s,s+h} (x,\omega), 
                & \omega \in \tilde{\Omega} \\
            x, & \omega \notin \tilde{\Omega}.
        \end{cases}
    \end{align*}
    The limit in the first case exists by \eqref{proof_proposition_nullset} and (v). Therefore, for $\omega \in \tilde{\Omega}$, \eqref{proof_proposition_nullset} holds for all $(s,t)\in \Delta$ and all $x \in \bar{X}$.\\
    We claim that $\tilde{\phi}$ satisfies all required properties.
    
    To see that $\tilde{\phi}$ is measurable we use the fact that $\cB(\bar X)$ is countably generated and separates point and therefore, $\big(\bar X,\cB(\bar X)\big)$ can be embedded in $\big([0,1],\cB\big([0,1]\big)\big)$ as a measurable space, \cite[p.194]{Z}. Then
    $$
    \tilde{\phi}_{s,s+t}(x,\omega)=
        \begin{cases}
            \int_0^1\phi_{s+r,s+r+t} (x,\theta_{-r}\omega)\,\mathrm{d} r, &\omega \in \tilde{\Omega},\\ 
             x, & \omega \notin \tilde{\Omega},
        \end{cases}    
    $$
    and measurability of $\tilde{\phi}$ follows from the fact that $\tilde{\Omega} \in \cF$, measurability of $\phi$ and Fubini's theorem. 
    \begin{enumerate}
        \item[(i)] follows directly by assumption (i) and the definition of $\tilde{\phi}$,
        \item[(ii)]  follows by (i) and \eqref{proof_proposition_nullset},
        \item[(iii')] follows from \eqref{proof_proposition_nullset} and the fact that $\tilde{\Omega}$ is invariant under $\theta$ ,
        \item[(iv)] is clear when $\omega \notin \tilde{\Omega}$. If $\omega \in \tilde{\Omega}$, $0\leq s<t$, $x \in X$, $\tilde{\phi}_{s,t}(x,\omega)\in X$ and $s_n \downarrow s$ such that $s_1<t$, then, due to assumption (iv), there 
        exists a Lebesgue null set $N$ (possibly depending on $s$, the sequence $(s_n)$, $\omega$, $x$ and $t$) such that for 
        $r \notin N$ we have
        $$
        \tilde{\phi}_{s,t}(x,\omega)=\phi_{s+r,t+r}(x,\theta_{-r}\omega)=\lim_{n \to \infty} \phi_{s_n+r,t+r}(x,\theta_{-r}\omega)
        $$
        and
        $$
        \tilde{\phi}_{s_n,t}(x,\omega)=\phi_{s_n+r,t+r}(x,\theta_{-r}\omega),
        $$
        so 
        $$
        \lim_{n \to \infty} \tilde{\phi}_{s_n,t}(x,\omega)=\tilde{\phi}_{s,t}(x,\omega)
        $$
        and (iv) follows.
        \item[(v)] follows from \eqref{proof_proposition_nullset} and assumption (v).
        \item[(vi)] Assume that $\omega \in \tilde{\Omega}$ (otherwise there is nothing to prove since 
        $\mP(\tilde{\Omega})=1$). 
        For each $s \geq 0$, there exists a null set $\hat N_s$ in $\tilde\Omega$ such that, for some $r \geq 0$ (possibly depending on $s,t,x,\omega)$, we have $\tilde{\phi}_{s,t}(x,\omega) = \phi_{s+r, t+r}(x,\theta_{-r} \omega) = \phi_{0,t-s}(x,\theta_s \omega)=\phi_{s,t} (x,\omega)$ for $\omega \notin \hat N_s$ and  all $t \geq s$ and $x \in \bar{X}$, so  the  claim 
        follows. 
        \item[(vii)]\hspace{-.15cm}-(x) follow easily from the definitions and \eqref{proof_proposition_nullset}. 
    \end{enumerate}
\end{proof}

\begin{remark}
It is natural to ask if property (vi) in the proposition can be sharpened by stating that 
the processes $(s,t,x)\mapsto \tilde \phi_{s,t}(x)$ and $(s,t,x)\mapsto \phi_{s,t}(x)$ are indistinguishable. We do not know if this holds in general but this is certainly true when $\phi$ only takes values in $X$ since then (iv) can be employed to show that 
the exceptional sets in (vi) can be chosen independently of $s$.
\end{remark}
\begin{theorem}
    \label{existence_RDS_flow}
    Let $\theta=\big(\Omega,\cF,\mP, \{\theta_t\}_{t \in \R}\big) $ be a metric dynamical system and let $(\phi,\Theta)$ be a local semi-flow defined on $(\Omega,\cF,\mP)$ taking values in  a Polish space $X$. Assume that for every $s \geq 0$ there exists a null set
    $N_s$ such that for
   all $x \in X, s\geq 0$ and $\omega \notin N_s$    
    \begin{align}
        \label{explosion_time_invariant}
        \Theta(s,x) (\omega) = \Theta (0,x) (\theta_s \omega) +s
    \end{align}
    and
    \begin{align}
        \label{flow_time_invariant}
        \phi_{s,s+t}(x,\omega)=\phi_{0,t}(x,\theta_s\omega)\quad \text{ for all }  t \in [s, \Theta(s,x)).
    \end{align}
    Then there exist $\tau: X \times \Omega \rightarrow (0, \infty]$ and $\varphi: D  \rightarrow X$ 
    where $$D = \left\{ (t,\omega,x) \in [0,\infty) \times \Omega \times X : t < \tau(x, \omega) \right\}$$
    such that $(\phi_{0,.},\Theta(0,.))$ and $(\varphi, \tau)$ are indistinguishable and  $(\theta,\varphi,\tau)$ is a local RDS. \\
    If $(\phi,\Theta)$ is a continuous/global semi-flow, then $(\varphi,\tau)$ is a continuous/global RDS.
\end{theorem}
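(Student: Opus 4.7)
The plan is to reduce to Proposition~\ref{Proposition_perfection_theorem} by extending $\phi$ to a map on $\bar{X} = X \cup \{\partial\}$. Set $\hat\phi_{s,t}(x,\omega) := \phi_{s,t}(x,\omega)$ when $x \in X$ and $t < \Theta(s,x,\omega)$, and $\hat\phi_{s,t}(x,\omega) := \partial$ otherwise (in particular $\hat\phi_{s,t}(\partial,\omega) = \partial$). The hypotheses (i)--(v) of the Proposition are then routine for $\hat\phi$: (i) and (v) follow from the absorbing convention together with right continuity of $t \mapsto \phi_{s,t}(x,\omega)$ strictly before blow-up; (ii) is the local semi-flow identity (iv)--(v); (iii) is exactly the Theorem's assumptions \eqref{explosion_time_invariant} and \eqref{flow_time_invariant}; and (iv) combines lower semi-continuity of $\Theta$ (which ensures $t < \Theta(s,x,\omega)$ persists for $s$ slightly above $s_0$) with right continuity in $s$ from local semi-flow property (ii). Applying the Proposition produces a measurable $\tilde\phi$ satisfying (i), (ii), (iv), (v), (iii'), (vi), (vii) pointwise on $\Omega$, plus (ix) and (x) when $\phi$ has the respective continuity properties.

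Define $\tilde\tau(x,\omega) := \inf\{t \ge 0 : \tilde\phi_{0,t}(x,\omega) = \partial\}$ and $\tilde\varphi_t(x,\omega) := \tilde\phi_{0,t}(x,\omega)$ for $t < \tilde\tau(x,\omega)$. Combining (ii) and (iii') of the Proposition gives, for all $s,t \ge 0$, $x \in \bar{X}$, $\omega \in \Omega$,
\begin{equation*}
\tilde\phi_{0,s+t}(x,\omega) = \tilde\phi_{s,s+t}\big(\tilde\phi_{0,s}(x,\omega),\omega\big) = \tilde\phi_{0,t}\big(\tilde\phi_{0,s}(x,\omega),\theta_s\omega\big).
\end{equation*}
Because $\partial$ is absorbing, this yields both the consistency identity (3) and the cocycle identity (4) of the local RDS definition pointwise on all of $\Omega$; right continuity in $t$ comes directly from (v).

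The main obstacle is pointwise lower semi-continuity of $\tilde\tau(\cdot,\omega)$ in $x$ for \emph{every} $\omega$. For $\omega \in \tilde\Omega$, the representation \eqref{proof_proposition_nullset} from the proof of the Proposition yields $\tilde\phi_{0,t}(x,\omega) = \hat\phi_{r,r+t}(x,\theta_{-r}\omega)$ for Lebesgue-a.e.\ $r > 0$. On the full-measure set of $r$ with $\theta_{-r}\omega \notin N_r$, hypothesis \eqref{explosion_time_invariant} gives $\Theta(r,x,\theta_{-r}\omega) = \Theta(0,x,\omega) + r$, so that $\hat\phi_{r,r+t}(x,\theta_{-r}\omega) \in X$ if and only if $t < \Theta(0,x,\omega)$. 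A Fubini-type argument then produces a $\theta$-invariant subset $\Omega^* \subset \tilde\Omega$ with $\mP(\Omega^*) = 1$ on which $\tilde\tau(x,\omega) = \Theta(0,x,\omega)$ for every $x \in X$. Setting
\begin{equation*}
\tau(x,\omega) := \begin{cases} \tilde\tau(x,\omega), & \omega \in \Omega^*, \\ \infty, & \omega \notin \Omega^*,\end{cases} \quad \varphi_t(x,\omega) := \begin{cases} \tilde\varphi_t(x,\omega), & \omega \in \Omega^*,\ t < \tau(x,\omega), \\ x, & \omega \notin \Omega^*,\end{cases}
\end{equation*}
the function $\tau(\cdot,\omega)$ is either the lower semi-continuous function $\Theta(0,\cdot,\omega)$ or identically $\infty$, so pointwise LSC holds; the cocycle identities survive the modification precisely because $\Omega^*$ is $\theta$-invariant.

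Indistinguishability of $(\varphi,\tau)$ from $(\phi_{0,\cdot},\Theta(0,\cdot))$ follows from (vi) of the Proposition applied with $s = 0$: on a full-$\mP$ subset of $\Omega^*$ we have $\tilde\phi_{0,\cdot}(\cdot,\omega) = \hat\phi_{0,\cdot}(\cdot,\omega)$, so $\tau = \Theta(0,\cdot)$ and $\varphi = \phi_{0,\cdot}$ where the latter is defined. In the global case $\Theta \equiv \infty$ the state $\partial$ is never visited and $\tau \equiv \infty$ is automatic. The continuous case is handled by (x) of the Proposition, which transfers continuity of $\phi$ in $x$ to $\tilde\phi$ and hence to $\varphi$.
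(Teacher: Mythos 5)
Your proposal follows essentially the same route as the paper: extend $\phi$ to $\bar X=X\cup\{\partial\}$ with $\partial$ absorbing, check hypotheses (i)--(v) of Proposition \ref{Proposition_perfection_theorem}, apply it, set $\varphi_t:=\tilde\phi_{0,t}$ and $\tau:=\inf\{t>0:\tilde\phi_{0,t}(x,\omega)=\partial\}$, and obtain the cocycle identity by combining (ii) with (iii'). The one place you genuinely diverge is the treatment of lower semi-continuity of $x\mapsto\tau(x,\omega)$, which the paper leaves implicit. Your instinct that this needs an argument is right, but your fix is both more than is needed and not fully justified as written: the set $\Omega^*$ on which $\tilde\tau(\cdot,\omega)=\Theta(0,\cdot,\omega)$ for \emph{all} $x$ requires handling the null sets $N_r$ jointly in $r$ (the same projection-theorem/completion device used for $M$ in the Proposition's proof -- mere ``Fubini'' does not apply since $(r,\omega)\mapsto\mathbf 1_{N_r}(\theta_{-r}\omega)$ has no a priori joint measurability), and the asserted $\theta$-invariance of $\{\omega:\lambda(r>0:\theta_{-r}\omega\in N_r)=0\}$ is not automatic and would have to be built in by a two-parameter condition as for $\tilde\Omega$. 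In fact no further modification of $\varphi$ is necessary: for $\omega\notin\tilde\Omega$ one has $\tau\equiv\infty$, while for $\omega\in\tilde\Omega$ the representation \eqref{proof_proposition_nullset} gives, for each fixed $x$, $\tau(x,\omega)=\Theta(r,x,\theta_{-r}\omega)-r$ for a.a.\ $r>0$; given $x_n\to x$ one picks a single $r$ generic for $x$ and all $x_n$ and invokes lower semi-continuity of $y\mapsto\Theta(r,y)(\theta_{-r}\omega)$ from the definition of a local semi-flow, which uses neither \eqref{explosion_time_invariant} nor any exceptional set. With that simplification your argument coincides with the paper's, and the remaining steps (consistency of $\tau$ under the shift, indistinguishability via (vi) with $s=0$, the global and continuous cases) are handled as in the paper.
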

\begin{proof}
    Define $\bar{X}:= X \cup \left\{\partial\right\}$  as before. For $0\leq s \leq t$ we set
    \begin{align*}
        \bar{\phi}_{s,t}(x,\omega) = 
        \begin{cases}
            \phi_{s,t}(x,\omega), & \text{if } t<\Theta(s,x)(\omega) \text{ and } x \in X, \\
            \partial, & \text{else}.
        \end{cases}
    \end{align*}
    Then, the continuity assumptions on $\phi$ transfer to $\bar{\phi}$. In particular,
    \begin{itemize}
        \item $s \mapsto \bar{\phi}_{s,t}(x)$ is right continuous on $[0,t)$ if $\bar{\phi}_{s,t}(x)\in X$
        \item $t \mapsto \bar{\phi}_{s,t}(x)$ is right continuous on $[s,\infty)$ for every $x \in \bar{X}$ and $s \geq 0$. 
    \end{itemize}
    By property (v) of the local semi-flow, for all $0\leq s \leq t \leq u$ and $x \in \bar{X}$
    \begin{align*}
        \bar{\phi}_{s,u}(x) = \bar{\phi}_{t,u}(\bar{\phi}_{s,t} (x)).
    \end{align*}
    Using our assumptions \eqref{explosion_time_invariant} and \eqref{flow_time_invariant}, we get
    \begin{align*}
        \bar{\phi}_{s,s+t}(x,\omega)=\bar{\phi}_{0,t}(x,\theta_s\omega)
    \end{align*}
    for all $0\leq s \leq t,x \in \bar{X}$ and $\omega \notin N_s$.\\
    We can now apply Proposition \ref{Proposition_perfection_theorem} and obtain a map $\tilde{\phi}$ 
    satisfying all conclusions in Proposition \ref{Proposition_perfection_theorem} including (vii)
    such that  $\tilde{\phi}_{0,.}(.)$ and $\bar{\phi}_{0,.}(.)$  are indistinguishable. In particular, 
    $\tilde{\phi}$ satisfies the assumptions of the theorem with $N_s=\emptyset$.
    Set $\hat{\varphi}_t(x,\omega): = \tilde{\phi}_{0,t}(x,\omega)$. Then $\hat{\varphi}$ satisfies the cocycle property
    \begin{align*}
       \hat{\varphi}_{s+t}(x,\omega) &= \tilde{\phi}_{0,s+t}(x,\omega)
       = \tilde{\phi}_{s,s+t} ( \tilde{\phi}_{0,s} (x,\omega), \omega) \\
       &= \tilde{\phi}_{0,t} ( \tilde{\phi}_{0,s} (x,\omega), \theta_s\omega)
       = \hat{\varphi}_{t}(\hat{\varphi}_{s}(x,\omega),\theta_s\omega)
    \end{align*}
    for any $s,t\geq 0$, $x \in \bar{X}$ and $\omega \in \Omega$.\\
    Set $\tau (x,\omega) = \inf \left\{ t> 0 : \hat\varphi_t (x,\omega) = \partial \right\}$ and $D= \left\{ (\omega,t,x) \in \Omega \times [0,\infty) \times X: t < \tau(x,\omega) \right\}$.  \\
    Let $\varphi: = \hat{\varphi}|_D$. Then $(\theta,\varphi,\tau)$ is a local random dynamical system on $X$ and the indistinguishability statement follows from the construction. The final statements in the theorem are clear.
\end{proof}

\section{Application to SDEs}

We can use the previous theorem to show that SDEs that admit a (local) stochastic flow also generate a (local) RDS.

Let us repeat the basic strategy: if an SDE (possibly infinite dimensional, e.g.~an SPDE or a stochastic delay equation) with time-independent coefficients driven by a process with stationary increments has unique local solutions, then, in many cases, the solutions can be shown to admit a local 
or global continuous semi-flow to which the previous theorem can be applied, so they generate an RDS. 
Examples of results of this kind are contained in \cite{AS} where the processes driving a finite dimensional SDE 
were continuous semimartingales (with stationary increments) and \cite[Theorem 5]{KS}, in which the 
driving semimartingales were just c\`adl\`ag (i.e.~right continuous with left limits) but the coefficients of the SDE were assumed to satisfy a global Lipschitz condition (thus excluding the possibility of blow-up). For simplicity we now restrict to finite dimensional equations.\\
In the following we assume that $Z^1, \dots , Z^m$ are real-valued c\`adl\`ag semimartingales and that $f^k: \mathbb{R}^d \rightarrow \mathbb{R}^d$ are measurable for $1\leq k\leq m$.

\begin{definition}
    We say that that an SDE 
     \begin{align}
        \label{SDE}
        \text{d} X_t = \sum_{k=1}^m f^k(X_{t-}) \text{ d}Z^k_t
    \end{align}
    has a \emph{(strong) local solution} if for each $x \in \mR^d$ and $s \geq 0$, there exists a stopping time $\tau= \tau_{s,x}>s$ and an $\mR^d$-valued adapted process $X_t$, $t \in [s, \tau)$ with c\`adl\`ag paths such that $X_t = x + \sum_{k=1}^m \int_s^t f^k(X_{u-}) \text{d}Z^k_u $ almost surely whenever $s \leq t < \tau$ and 
    $\limsup_{t \rightarrow \tau} |X_t| = \infty$ almost surely on the set $\left\{ \tau < \infty \right\}$. \\
    We say that the local solution is \emph{unique} if whenever $\tilde{X}_t$, $t\geq s$ is another process with these properties with associated stopping time $\tilde{\tau}$, then $\tau = \tilde{\tau}$ and $X = \tilde{X}$ on $[s,\tau)$ almost surely. \\
    We say that \eqref{SDE} admits a \emph{local semi-flow}, if it has a unique local solution which admits a modification $(\phi, \Theta)$ which is a local semi-flow. In particular, $\limsup_{t \rightarrow \Theta(s,x)} | \phi_{s,t} (x)| = \infty$ whenever $s \geq 0, x \in \mR^d$ and $\Theta (s,x) < \infty$.\\
\end{definition}

\begin{theorem}\label{SDERDS}
    Let $Z^1, \dots , Z^m$ be real-valued c\`adl\`ag semimartingales with stationary increments in the sense that the law of $Z^k_{t+h} - Z^k_t,\,h\geq 0,\,k\in\{1,...,m\}$ does not depend on $t$. Moreover let $f^k: \mathbb{R}^d \rightarrow \mathbb{R}^d$ be measurable for $1\leq k\leq m$ .
    If the  equation
    \begin{align*}
        \dif X_t = \sum_{k=1}^m f^k(X_{t-}) \dif Z^k_t
    \end{align*}
    has a unique strong local solution which admits a continuous local semi-flow, then it also admits a continuous local RDS.\\
    If the semi-flow  is even global, then it admits a (global) RDS.
\end{theorem}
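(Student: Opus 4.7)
The strategy is to apply Theorem~\ref{existence_RDS_flow}, which reduces matters to producing a metric dynamical system $\theta$ and verifying the two crude invariance identities \eqref{explosion_time_invariant} and \eqref{flow_time_invariant} for each fixed $s$ outside a null set that is independent of $x$.

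To build the MDS, I would take $\Omega := D(\R,\R^m)$ with its Borel $\sigma$-algebra. Stationarity of the increments of $(Z^1,\dots,Z^m)$ makes the finite-dimensional distributions of a two-sided extension consistent, so Kolmogorov's theorem produces a measure $\mP$ on $\Omega$ under which the coordinate process is distributed as the two-sided extension of $(Z^1,\dots,Z^m)$. The increment shifts $\theta_s\omega(t) := \omega(t+s) - \omega(s)$ are jointly measurable, preserve $\mP$, and satisfy $\theta_0 = \mathrm{id}$ and $\theta_{t+s} = \theta_t\theta_s$, so $(\Omega, \cF, \mP, \{\theta_s\}_{s\in \R})$ is an MDS. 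By strong uniqueness, the local solution---and hence the semi-flow $(\phi,\Theta)$---is a measurable functional of the driver, so we may realize $(\phi,\Theta)$ on this canonical space.

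For fixed $(s,x)$, the processes $t \mapsto \phi_{s,s+t}(x,\omega)$ and $t \mapsto \phi_{0,t}(x,\theta_s\omega)$ are both strong local solutions of the SDE with driver $(Z^k_{s+\cdot} - Z^k_s)_{k=1}^m$ and initial condition $x$ at time $0$, so strong uniqueness forces them to be indistinguishable, with equal explosion times. This provides a null set $N_{s,x}$ outside of which \eqref{explosion_time_invariant} and \eqref{flow_time_invariant} hold for this particular $x$. Setting $D_1 := \mathbb{Q}^d$ and $N_s := \bigcup_{x\in D_1} N_{s,x}$, off $N_s$ both identities hold simultaneously for every $x \in D_1$.

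The main obstacle is to promote this to every $y \in \R^d$: because $\Theta$ is only lower semi-continuous, equality on a dense set does not transfer automatically. Fix $\omega \notin N_s$, $y \in \R^d$, and approximate by $x_n \in D_1$ with $x_n \to y$. If $s+t < \Theta(s,y)(\omega)$, lower semi-continuity of $\Theta(s,\cdot)(\omega)$ at $y$ and continuity of $\phi_{s,s+t}(\cdot,\omega)$ give, for large $n$, that $s+t < \Theta(s,x_n)(\omega)$ and $\phi_{0,t}(x_n,\theta_s\omega) = \phi_{s,s+t}(x_n,\omega) \to \phi_{s,s+t}(y,\omega) \in \R^d$. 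Suppose for contradiction that $T := \Theta(0,y)(\theta_s\omega) \leq t$; then running the same approximation at each $u < T$ (where $\phi_{0,u}(\cdot,\theta_s\omega)$ is continuous at $y$ and $s+u<\Theta(s,y)(\omega)$) identifies $\phi_{0,u}(y,\theta_s\omega)$ with $\phi_{s,s+u}(y,\omega)$. The former satisfies $\limsup_{u\uparrow T}|\phi_{0,u}(y,\theta_s\omega)| = \infty$, while the latter is càdlàg on the compact interval $[s,s+t]$ and hence bounded, a contradiction. Thus $T > t$, continuity of $\phi_{0,t}(\cdot,\theta_s\omega)$ at $y$ yields \eqref{flow_time_invariant}, and running the symmetric argument starting from $t < \Theta(0,y)(\theta_s\omega)$ provides the reverse inequality, hence \eqref{explosion_time_invariant}. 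Feeding these into Theorem~\ref{existence_RDS_flow} produces the desired (continuous/global) local RDS, since continuity in $x$ and globality are preserved by that theorem.
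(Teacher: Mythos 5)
Your overall route is the same as the paper's: build the canonical c\`adl\`ag path space with the increment shift $\theta_s\omega(\cdot)=\omega(s+\cdot)-\omega(s)$, verify \eqref{explosion_time_invariant} and \eqref{flow_time_invariant} for a fixed $s$ outside a null set obtained by taking the union over a countable dense set of initial conditions, and then feed the result into Theorem~\ref{existence_RDS_flow}. Your extension from $x\in\mathbb{Q}^d$ to all $y\in\R^d$ (using lower semi-continuity of $\Theta$, continuity of $\phi$ in $x$, and a blow-up/boundedness contradiction to match the explosion times) is in fact spelled out more carefully than in the paper, which compresses this into one sentence plus a $\limsup$ identity.

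The one place where your argument has a genuine hole is the sentence asserting that ``the processes $t \mapsto \phi_{s,s+t}(x,\omega)$ and $t \mapsto \phi_{0,t}(x,\theta_s\omega)$ are both strong local solutions of the SDE with driver $(Z^k_{s+\cdot}-Z^k_s)_{k}$''. For the second process this is immediate from measure preservation of $\theta_s$, but for the first it is exactly the point that requires work: you must show that
\begin{align*}
\Big(\int_{(s,s+t]} f^k(\phi_{s,u-}(x,\cdot))\,\dif Z^k_u\Big)(\omega)
=\Big(\int_{(0,t]} f^k(\phi_{s,(s+u)-}(x,\cdot))\,\dif Z^k_u(\theta_s(\cdot))\Big)(\omega)
\end{align*}
almost surely, i.e.\ that the It\^o integral against a general c\`adl\`ag semimartingale commutes with the time shift. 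This is not a pathwise identity: the stochastic integral is defined only up to null sets through a limit procedure (and with respect to a filtration for which the shifted driver must again be a semimartingale), so one has to check that the construction is compatible with the measure-preserving transformation $\theta_s$. The paper isolates this as equation \eqref{integral} and proves it by verifying it for simple predictable integrands and passing to the limit, alternatively invoking Protter's result on semimartingales and measure-preserving flows. Your proposal treats this as self-evident; without it, strong uniqueness cannot be applied to compare the two processes, since they have not yet been exhibited as solutions of the \emph{same} equation. Adding this lemma (or the citation) closes the gap, after which your proof coincides with the paper's.
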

\begin{proof}
    All we have to do is to transfer the local semi-flow to a suitable MDS and to check that conditions
    \eqref{explosion_time_invariant} and \eqref{flow_time_invariant} hold. This is done similarly as in
    \cite{AS} and in \cite[Section 3]{KS}.
    
    We can and will assume that  $Z^1, \dots , Z^m$ are defined on $\R$ and are 0 at 0.
    
    \emph{Step 1: Definition of MDS}\\
    Let $\Omega= \mathcal{D}_0(\mathbb{R},\mathbb{R}^m)$ be the set of c\`adl\`ag functions from $\mathbb{R}$ to $\mathbb{R}^m$ which are $0$ at $0$ 
    and equipped with the (local) Skorokhod topology and let $\mathcal{F}$ be the Borel-$\sigma$-algebra of $\Omega$. Set $Z=(Z^1, \dots, Z^m)$ and let $\mathbb{P}$ be the law of $(Z_t)_{t\in \mathbb{R}}$. Define a shift $\theta$ on $\Omega$ by
    \begin{align*}
        (\theta_t \omega) (s) = \omega(t+s) - \omega(t)
    \end{align*}
    for all $\omega \in \Omega$ and $t,s\in \mathbb{R}$. We  now define the process $Z$ on the MDS $\theta$ by $Z^k_t(\omega):=\omega_k(t),\, t\in \R, \,k\in \{1,...,m\}$ (we will not use a new symbol).\\
    \emph{Step 2: Time-invariance of stochastic flow}\\
    To apply  Theorem \ref{existence_RDS_flow}, it only remains to show that equations \eqref{explosion_time_invariant} and \eqref{flow_time_invariant} hold for the continuous local semi-flow  $(\phi_{s,t}(x), \Theta(s,x))$ generated by the SDE on the MDS $\theta$.
    Fix $s\geq 0$ and $x\in \mathbb{R}^d$. Then 
    \begin{align}
        \label{proof_solution1}
        \phi_{s,s+t}(x,\omega)= x + \sum_{k=1}^m \left( \int_{(s,s+t]} f^k ( \phi_{s,u-}(x,.))\, \dif Z^k_u \right) (\omega)
    \end{align}
    almost surely for all $t\in [0, \Theta(s,x)(\omega)-s)$. Further, we have
     \begin{align}
        \label{proof_solution2}
        \phi_{0,t}(x,\theta_s\omega)= x + \sum_{k=1}^m \left( \int_{(0,t]} f^k ( \phi_{0,u-}(x,.))\, \dif Z^k_u \right) (\theta_s\omega)
    \end{align}  
    almost surely for all $t\in [0, \Theta(0,x)(\theta_s\omega))$.
    We want to show that
     \begin{align}
     \label{integral_unique_solution}
        \phi_{s,s+t}(x,\omega)= x + \sum_{k=1}^m \left( \int_{(0,t]} f^k ( \phi_{s,(s+u)-}(x,.)) \text{ d}Z^k_u(\theta_s(.)) \right) (\omega)
    \end{align}
    almost surely for all $t\in [0, \Theta(s,x)(\omega)-s)$. 
    Afterwards we can use the uniqueness of the solution.
    Comparing the right-hand side of \eqref{proof_solution1} and \eqref{integral_unique_solution}, \eqref{integral_unique_solution} holds true if
    \begin{align}
        \label{integral}
        \left(\int_{(s,s+t]} g^k(u,.) \text{ d}Z^k_u \right) (\omega)
        = \left( \int_{(0,t]} g^k(u+s, .) \text{ d}Z^k_u (\theta_s(.)) \right) (\omega)
    \end{align}
    for each $1 \leq k \leq m$ where $g^k(u, \omega)= f^k ( \phi_{s,u-} (x,\omega))$. One can check that \eqref{integral} is true by first checking it for a simple predictable function $g$ and then using an approximation argument. Alternatively, we can derive \eqref{integral} from \cite[Theorem 3.1 (vi)]{Pro}.
    \\
    Comparing  \eqref{proof_solution2} and \eqref{integral_unique_solution}, uniqueness of the local solution implies that $\Theta(s,x)(\omega)=\Theta(0,x) (\theta_s \omega)+s$ and  $\phi_{s,s+t}(x,\omega) =\phi_{0,t}(x,\theta_s \omega)$ for all $t\in [0,\Theta(s,x)(\omega)-s)$ and $\omega \notin N_{s,x}$. Here $N_{s,x}\in \mathcal{F}$ is a null set that may depend on $s$ and $x$. Indeed, choosing $N_s= \bigcup_{x \in \mathbb{Q}^d} N_{s,x}$, equation \eqref{flow_time_invariant} holds since $\phi$ is continuous in $x$. Then, for each $s\geq 0$ we have
    \begin{align*}
        \limsup_{t \rightarrow \Theta(s,x)(\omega) -s} |\phi_{0,t} (x, \theta_s \omega ) |
        =  \limsup_{t \rightarrow \Theta(s,x)(\omega)-s} |\phi_{s, s+t} (x, \omega ) | = \infty
    \end{align*}
    for all $x \in \mR^d$ and $\omega \notin N_s$. By uniqueness of the explosion time, equation \eqref{explosion_time_invariant} follows. Applying Theorem \ref{existence_RDS_flow} we obtain the claimed RDS.
\end{proof}

\begin{remark}
Let us emphasize that if an SDE generates a continuous local semi-flow and if the SDE admits a global 
solution for every initial condition then this does {\em not} imply that the local semi-flow is global. A counterexample of an SDE in $\R^2$ with bounded and infinitely differentiable coefficients can be found in \cite{LS}. SDEs which generate a  continuous local semi-flow and admit a global 
solution for every initial condition are often referred to as {\em weakly complete} while those generating a continuous global semi-flow are called {\em strongly} or {\em strictly complete} 
(sometimes this term is used if the solution map $\phi_{0,t}(x)$ admits a modification which is 
jointly continuous in $(t,x)$ which is slightly weaker than assuming that it generates a continuous global semi-flow, see \cite[Definition 1.3]{SS}). To prevent confusion about varying definitions of 
strong or weak completeness we will avoid these terms in what follows.

\end{remark}

\section{Examples of (local) RDS generated by  SDEs }
In this section we consider the following SDE on $\R^d$ with time homogeneous coefficients
\begin{align}\label{sde01}
\dif X_t=b(X_t)\,\dif t+\sigma(X_t)\,\dif W_t,\quad X_s=x\in\mathbb{R}^d,\quad t\geq s \geq 0,
\end{align}           
where $d\geq1$, $b: \mR^d\rightarrow\mR^d $ and $\sigma=(\sigma_{ij})_{1\leq i,j\leq d}: \mR^d\rightarrow L(\mathbb{R}^d)$ $(:=d\times d$ real valued matrices$)$ are measurable, and $(W_t)_{t\geq0}$ is a standard $d$-dimensional Brownian motion defined on some filtered probability space $(\Omega,\mathcal{F},(\mathcal{F}_t)_{t\geq 0},\mP)$. We denote the Euclidean norm on $\R^d$ by $|.|$ and the induced norm on $L(\R^d)$ by $\|.\|$. Further, $\langle.,.\rangle$ denotes the standard inner product on $\R^d$. Recall that
 the trace of $a:=(a_{ij})_{1\leq i,j\leq d}:=\sigma\sigma^*$ satisfies $\mathrm{tr}(a)=\sum_{i,j=1}^d\sigma_{ij}^2$, where $\sigma^*$ denotes the transpose of $\sigma$. 
\subsection{RDS generated by SDEs with locally monotone coefficients}

In this subsection we assume that, in addition, $b$ and $\sigma$ are continuous.
The following facts were established in \cite{SS}.

\begin{theorem}[{\cite[Theorem 2.4]{SS}}]\label{SS1}
Assume that there exists some $\mu>d+2$ such that, for each $R>0$, there exists some $K_R$ such that 
\begin{align*}
2\langle b(x)-b(y),x-y\rangle&+ \mathrm{tr}[(\sigma(x)-\sigma(y))(\sigma(x)-\sigma(y))^*]\\&+\mu\Vert \sigma(x)-\sigma(y)\Vert^2\leq K_R|x-y|^2\quad \text{ for all } |x|,|y|\leq R.
\end{align*}
Then the SDE \eqref{sde01} admits a continuous local semi-flow.
\end{theorem}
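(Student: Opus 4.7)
My plan is to reduce the construction of a local semi-flow to a suitable Kolmogorov-type moment estimate on differences of solutions, after first truncating to get global existence on a sequence of invariant balls.

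\smallskip
\emph{Step 1: Localization and existence.} For each $R>0$, define truncated coefficients $b_R,\sigma_R$ which coincide with $b,\sigma$ on the ball $B_R$ and are globally defined (e.g.\ by multiplying with a smooth cut-off and extending by zero). The local monotonicity assumption then passes to a global monotonicity estimate for $(b_R,\sigma_R)$, so by standard Yamada--Watanabe theory the SDE with coefficients $(b_R,\sigma_R)$ has a unique strong global solution $X^{R,s,x}_t$ for every starting pair $(s,x)$. For each $(s,x)$, define $\tau_R(s,x):=\inf\{t\ge s:|X^{R,s,x}_t|\ge R\}$; solutions for different truncation levels agree up to these stopping times, so one can consistently define $\Theta(s,x):=\sup_R \tau_R(s,x)$ and a local solution $X^{s,x}$ on $[s,\Theta(s,x))$.

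\smallskip
\emph{Step 2: Key moment estimate.} For two initial data $(s,x)$ and $(s,y)$ with $s$ common, and writing $\Delta_t:=X^{R,s,x}_t-X^{R,s,y}_t$, I would apply It\^o's formula to $|\Delta_t|^{\mu+2}$. The resulting drift is
\[
\tfrac{\mu+2}{2}|\Delta_t|^{\mu}\Bigl\{2\langle b(X^{R,s,x}_t)-b(X^{R,s,y}_t),\Delta_t\rangle/|\Delta_t|^2 + \tr[(\sigma-\sigma)(\sigma-\sigma)^{*}] + \mu \,|\Delta_t|^{-2}|(\sigma-\sigma)^{*}\Delta_t|^2\Bigr\},
\]
and Cauchy--Schwarz bounds the last term by $\mu \|\sigma(X^{R,s,x}_t)-\sigma(X^{R,s,y}_t)\|^2$. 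The hypothesis then yields a pointwise bound of the form $\tfrac{\mu+2}{2}K_R|\Delta_t|^{\mu+2}$, and Gronwall gives $\mathbb{E}[|\Delta_t|^{\mu+2}\mathbf{1}_{t<\tau_R}]\le C_{R,T}|x-y|^{\mu+2}$ uniformly in $s,t\in[0,T]$. The corresponding bound on time-increments $\mathbb{E}|X^{R,s,x}_{t_1}-X^{R,s,x}_{t_2}|^{\mu+2}\le C|t_1-t_2|^{(\mu+2)/2}$ follows from BDG and linear growth of $b_R,\sigma_R$. Comparing two starting times $s<s'$ is reduced to the previous cases by splitting $X^{R,s,x}_t-X^{R,s',y}_t=(X^{R,s,x}_t-X^{R,s,X^{R,s,x}_{s'}}_t)+(X^{R,s,X^{R,s,x}_{s'}}_t-X^{R,s',y}_t)$ and using the flow property of the truncated equation.

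\smallskip
\emph{Step 3: Kolmogorov and semi-flow assembly.} Since $\mu>d+2$, the moment bound above lets Kolmogorov's continuity criterion produce a modification $\phi^R_{s,t}(x)$ which is jointly continuous in $(s,t,x)$ on $\{s\le t,\,|x|<R\}$. On this set, pathwise uniqueness promotes the a.s.\ flow identity $\phi^R_{s,u}(x)=\phi^R_{t,u}(\phi^R_{s,t}(x))$ into an identity valid simultaneously for all $(s,t,u,x)$ on a set of full probability, using continuity to remove exceptional nulls. The consistent family $\{\phi^R\}_R$ is then patched into a single continuous local semi-flow by setting $\phi_{s,t}(x):=\phi^R_{s,t}(x)$ for any $R$ with $|\phi^R_{s,u}(x)|<R$ throughout $[s,t]$, and $\Theta(s,x):=\lim_{R\to\infty}\tau_R(s,x)$. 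Lower semi-continuity of $\Theta$ on $(s,x)$ is the delicate point: it follows because $\{(s,x):\Theta(s,x)>T\}=\bigcup_R\{(s,x):\tau_R(s,x)>T\}$ and each set on the right is open by joint continuity of $\phi^R$ in $(s,t,x)$ together with continuity of the level-set condition $\sup_{u\in[s,T]}|\phi^R_{s,u}(x)|<R$.

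\smallskip
\emph{Main obstacle.} The serious technical point is the simultaneous perfection of the flow identity together with lower semi-continuity of $\Theta$: one must carefully choose the exceptional null set so that it works uniformly in $(s,t,u,x)$, which is exactly where joint continuity from Kolmogorov (and hence the threshold $\mu>d+2$ rather than $\mu>d$) is essential. The rest is a standard, if lengthy, patching argument.
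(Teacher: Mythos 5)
The paper itself does not prove Theorem \ref{SS1}; it is imported verbatim from \cite[Theorem 2.4]{SS}, so there is no ``paper's own proof'' to compare against line by line. That said, your proposal is essentially a correct reconstruction of the argument used in the cited source: It\^o's formula applied to $|\Delta_t|^{q}$ with $q=\mu+2$, so that the second-order term produces exactly the combination $2\langle b(x)-b(y),x-y\rangle+\mathrm{tr}[\cdot]+(q-2)\Vert\sigma(x)-\sigma(y)\Vert^2$ controlled by the hypothesis, followed by Gronwall, an anisotropic Kolmogorov argument in the $(d+2)$-dimensional parameter $(s,t,x)$ (whose time coordinates each have metric dimension $2$ under the $|\cdot|^{1/2}$ scaling, which is precisely why the threshold is $\mu>d+2$ and not $\mu>d-2$), and a patching of the truncated flows with $\Theta=\lim_R\tau_R$. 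Your identification of the delicate points (simultaneous perfection of the flow identity and lower semi-continuity of $\Theta$) is accurate.

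Two steps deserve more care than you give them. First, the claim that the local monotonicity condition ``passes to a global monotonicity estimate'' for the cut-off coefficients is not automatic: writing $\sigma_R(x)-\sigma_R(y)=\chi_R(x)(\sigma(x)-\sigma(y))+(\chi_R(x)-\chi_R(y))\sigma(y)$, the cross term can only be absorbed by Young's inequality at the cost of replacing $\mu$ by $\mu/(1+\varepsilon)$ plus a Lipschitz remainder; one must note that the strict inequality $\mu>d+2$ leaves room for this, or else avoid truncation altogether and work with the original solutions stopped at the exit time of $B_R$ (which is closer to what \cite{SS} does, and also sidesteps the question of whether the condition applies when only one of the two trajectories has left $B_R$). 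Second, the displayed drift in your Step 2 is dimensionally inconsistent (the term $2\langle b(X^{R,s,x}_t)-b(X^{R,s,y}_t),\Delta_t\rangle$ should not carry the factor $|\Delta_t|^{-2}$ if the other two summands in the bracket do not); this is a typo rather than a conceptual error, since the resulting bound $\tfrac{\mu+2}{2}K_R|\Delta_t|^{\mu+2}$ is the correct one. With these repairs the proposal is a sound proof outline, consistent in strategy with the reference the paper relies on.
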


\begin{theorem}[{\cite[Theorem 2.5]{SS}}]\label{SS2}
Assume that the following two conditions hold.
\begin{itemize}
    \item [(1)] There exists a non-decreasing function $g:[0,\infty)\rightarrow(0,\infty)$ 
    such that $\int_0^\infty\frac{1}{g(x)}\dif x=\infty$ and 
 $$ 
 2 \langle b(x),x\rangle +\mathrm{tr}\big[\sigma(x)\sigma^*(x)\big]\leq g\big(|x|^2\big),  
 $$   
 for all $x\in \R$. 
 \item [(2)] There exist a continuous and nondecreasing function $f:[0,\infty) \to (0,\infty)$ and
 $\mu>d+2$ such that for any $R>0$ and $|x|,|y| \leq R$ we have
 \begin{align}\label{conf}
2\langle b(x)-b(y),x-y\rangle &+ \mathrm{tr}[(\sigma(x)-\sigma(y))(\sigma(x)-\sigma(y))^*]\\&+\mu \Vert \sigma(x)-\sigma(y) \Vert^2\leq f(|x|\vee|y|)|x-y|^2
\end{align}
 and there exist $\gamma>0$ and $t_0>0$ such that, for each $R>0$,
 $$
\sup_{|x|\leq R}\sup_{s\in[0,t_0]}\mE e^{\gamma f(|X_{0,t}^x|)}<\infty,
 $$
 where $X_{0,t}^x$ denotes the solution of the SDE starting from $x\in \R^d$.
\end{itemize}

Then the SDE \eqref{sde01} admits a continuous global semi-flow.
\end{theorem}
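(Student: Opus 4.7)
The overall strategy is to apply Theorem \ref{SS1} to obtain a continuous local semi-flow from condition (2)'s local monotonicity, and then to establish that the blow-up time $\Theta$ equals $+\infty$ simultaneously for all initial conditions outside a single $\mP$-null set, using condition (1) for pointwise non-explosion and the exponential moment in (2) for uniform (in $x$) control of the flow. A final modification on that null set then turns the local semi-flow into a continuous global one.

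Since $f$ is non-decreasing, the local monotonicity in (2) implies the hypothesis of Theorem \ref{SS1} with $K_R:=f(R)$, yielding a continuous local semi-flow $(\phi,\Theta)$. For each fixed $x\in\R^d$, a standard Khasminskii argument with Lyapunov function $V(y):=\int_0^{|y|^2}\frac{\dif s}{g(s)}$ (which is $C^2$ and tends to $\infty$ at infinity by the divergence of $\int 1/g$) combined with condition (1) yields $\mE V(\phi_{0,t\wedge\tau_n}(x))\leq V(x)+Ct$, where $\tau_n$ is the exit time of $\phi_{0,\cdot}(x)$ from the ball of radius $n$; sending $n\to\infty$ gives $\Theta(0,x)=\infty$ almost surely for each fixed $x$.

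The crucial step is to promote this pointwise statement to one valid simultaneously for all $x$. I would derive a Kolmogorov-type estimate
\begin{equation*}
\mE\sup_{t\leq t_0}|\phi_{0,t}(x)-\phi_{0,t}(y)|^p\leq C_R|x-y|^p,\qquad|x|,|y|\leq R,
\end{equation*}
for some $p>d$, by applying It\^o's formula to $|\phi_{0,t}(x)-\phi_{0,t}(y)|^p$, absorbing the It\^o correction and cross-terms via the monotonicity in (2), and closing the resulting stochastic Gronwall inequality
\begin{equation*}
\dif|\phi_{0,t}(x)-\phi_{0,t}(y)|^p\leq p\, f\big(|\phi_{0,t}(x)|\vee|\phi_{0,t}(y)|\big)\,|\phi_{0,t}(x)-\phi_{0,t}(y)|^p\,\dif t+\dif M_t
\end{equation*}
through the exponential moment $\sup_{|x|\leq R,\,s\leq t_0}\mE e^{\gamma f(|\phi_{0,s}(x)|)}<\infty$ (with $p$ chosen small relative to $\gamma$ so that H\"older absorbs the exponential factor). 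Kolmogorov's continuity theorem then produces a jointly continuous modification of $(t,x)\mapsto \phi_{0,t}(x)$ on $[0,t_0]\times \overline{B_R}$ which, by uniqueness, agrees with $\phi_{0,t}(x)$ for each fixed $x$ on a set of full measure and which, being continuous on a compact set, is automatically bounded. Hence $\mP$-a.s.\ $\Theta(0,x)>t_0$ for every $|x|\leq R$. Iterating this bound over intervals $[kt_0,(k+1)t_0]$ via the semi-flow property and taking countable unions over $R,k\in\mathbb{N}$ yields a single null set $N$ off which $\Theta(0,x)=\infty$ for all $x\in\R^d$; redefining $(\phi,\Theta)$ on $N$ then gives a continuous global semi-flow.

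The main obstacle is exactly this third step. Pointwise non-explosion from Khasminskii does not by itself give simultaneous non-explosion, since a union of exceptional null sets over uncountably many $x$ need not be null, and the lower semi-continuity of $\Theta$ only gives $\Theta(x_0)\leq \liminf_{x\to x_0}\Theta(x)$, which puts no upper bound on $\Theta$ near points where it is large (witness the toy example $\Theta(0)=1$, $\Theta(x)=\infty$ for $x\neq 0$, which is LSC). The exponential moment condition in (2) is precisely the quantitative input needed to convert the local monotonicity bound into a uniform-in-$x$ $L^p$ increment estimate on compact sets of initial data, and hence, via Kolmogorov's criterion, into the simultaneous non-explosion that closes the proof.
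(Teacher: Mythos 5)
This statement is imported verbatim from \cite[Theorem 2.5]{SS}; the paper under review gives no proof of it, so there is no internal argument to compare yours against. Your outline does reproduce the strategy of the cited source: Theorem \ref{SS1} with $K_R=f(R)$ gives a continuous local semi-flow; the Lyapunov function $V(y)=\int_0^{|y|^2}g(s)^{-1}\,\dif s$ and condition (1) give non-explosion for each fixed $x$; and a two-point moment estimate via It\^o, a stochastic Gronwall argument fed by the exponential moment of $f(|X^x_{0,s}|)$, and Kolmogorov's continuity theorem, iterated over intervals of length $t_0$, upgrade this to simultaneous non-explosion. Two steps are glossed. First, you never say where $\mu>d+2$ enters: it is precisely what lets the It\^o correction and the Burkholder terms from the martingale part be absorbed by $-\mu\Vert\sigma(x)-\sigma(y)\Vert^2$ so that a $p$-th moment bound with some $p>d$ (required by Kolmogorov in dimension $d$) survives; without this the claim ``for some $p>d$'' is unsupported. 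Second, ``by uniqueness, agrees with $\phi_{0,t}(x)$ for each fixed $x$'' only yields an $x$-dependent null set; to conclude $\Theta(0,x)>t_0$ for \emph{all} $|x|\le R$ you must identify the Kolmogorov modification with the semi-flow simultaneously in $x$, which works by matching on a countable dense set and invoking the continuity in $x$ from Theorem \ref{SS1} together with lower semicontinuity of $\Theta$. Both gaps are repairable, and the overall route is the correct one.
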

Now we can apply Theorem \ref{SDERDS} and obtain the following result.

\begin{corollary}
Under the assumptions of Theorem \ref{SS1} respectively \ref{SS2}, equation \eqref{sde01} generates 
a local respectively a global RDS.
\end{corollary}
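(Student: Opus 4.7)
The plan is to apply Theorem \ref{SDERDS} directly: under either set of hypotheses it is the only tool needed. The first step I would carry out is to cast \eqref{sde01} in the semimartingale form of that theorem by taking $m=d+1$, $Z^0_t:=t$ and $Z^k_t:=W^k_t$ for $1\leq k\leq d$, and setting $f^0:=b$ together with $f^k$ equal to the $k$-th column of $\sigma$ for $1\leq k\leq d$. Each driver $Z^k$ is then a continuous (in particular c\`adl\`ag) semimartingale with stationary increments: deterministic time trivially, and Brownian motion by virtue of its independent stationary increments. The standing continuity of $b$ and $\sigma$ in this subsection makes all $f^k$ measurable, as required by Theorem \ref{SDERDS}.

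Next I would invoke Theorem \ref{SS1} (respectively Theorem \ref{SS2}) to produce a continuous local (respectively continuous global) semi-flow $(\phi,\Theta)$ for \eqref{sde01}. By the definition of an admissible local semi-flow, $t\mapsto \phi_{s,t}(x)$ is, for each initial datum $(s,x)$, an adapted c\`adl\`ag process solving the SDE on $[s,\Theta(s,x))$ that blows up at $\Theta(s,x)$ on $\{\Theta(s,x)<\infty\}$. The semi-flow property $\phi_{s,u}=\phi_{t,u}\circ\phi_{s,t}$ combined with this blow-up convention encodes precisely the uniqueness of the strong local solution that Theorem \ref{SDERDS} requires.

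With the semimartingale representation, strong local solution uniqueness, and existence of a continuous local (respectively global) semi-flow all verified, Theorem \ref{SDERDS} applies and delivers a continuous local RDS under the hypotheses of Theorem \ref{SS1} and a continuous global RDS under those of Theorem \ref{SS2}.

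I do not anticipate any genuine obstacle: the substantive work is already packaged inside Theorem \ref{SDERDS} (the perfection of the crude semi-flow and the verification of \eqref{explosion_time_invariant}, \eqref{flow_time_invariant} on the canonical path-space MDS) and inside Theorems \ref{SS1} and \ref{SS2} (the semi-flow construction under local monotonicity). The corollary is a clean concatenation of these three results, and only the bookkeeping of rewriting the drift--diffusion SDE as an equation against $(d+1)$ stationary-increment semimartingales has to be carried out by hand.
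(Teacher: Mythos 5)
Your proposal is correct and follows exactly the route the paper intends: the paper gives no separate proof of this corollary beyond the remark that Theorem \ref{SDERDS} applies, since Theorems \ref{SS1} and \ref{SS2} already supply the continuous local respectively global semi-flow (which, by the paper's definition of ``admits a local semi-flow,'' includes uniqueness of the strong local solution), and writing the drift--diffusion equation against the stationary-increment semimartingales $t$ and $W^1,\dots,W^d$ is the same bookkeeping you carry out. The only cosmetic point is that uniqueness is not ``encoded'' by the semi-flow property but is part of the hypothesis packaged into Theorems \ref{SS1} and \ref{SS2} via the cited well-posedness results; this does not affect the argument.
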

\begin{remark}\label{rmnt}
Existence and uniqueness of strong local solutions to \eqref{sde01} were studied in \cite[Theorem 3.1.1]{LWMR} under more general monotonicity conditions. \cite{SS} also study more general SDEs which are
driven by Kunita-type Brownian flows. They also provide three examples of explicit conditions on the coefficients $b,\sigma$ and the function $f$ such that the assumptions of Theorem \ref{SS2} hold, 
\cite[Proposition 2.3]{SS}. If, for example, $b$ and $\sigma$ are globally bounded, then one can choose 
$f(u)=\beta \big(u^2+1\big)$ for an arbitrary $\beta>0$.
\end{remark}

\subsection{Existence of RDS for singular SDEs}
 Unlike the previous example we now focus on  singular SDEs for which the drift $b$ is not pointwise defined but instead satisfies some integrability condition. The price we have to pay is a non-degeneracy assumption on the diffusion part which is needed to ensure the existence of solutions.  There are many  papers on singular SDEs  studying its well-posedness and properties of the solutions, see e.g. \cite{KR},  \cite{VE}, \cite{XXZZ}, \cite{Zhang2017}, \cite{XieZhang}, \cite{Zv}.  Our work seems to be the first one showing that, under appropriate conditions, such equations generate an  RDS. 
 
 We first introduce some notation for later use. For $p\in[1,\infty)$, let $\mathbb{L}_p(\mathbb{\mR}^d)$ denote the space of all real Borel measurable functions on $\mathbb{R}^d$ equipped with the norm
$$\Vert f\Vert_{\mathbb{L}_p}:=\Big(\int_{\mathbb{R}^d}|f(x)|^p\,\dif x\Big)^{1/p}<+\infty.$$
We also introduce the notion of  a localized $\mL_p$-space: for fixed $r>0$,
$$\tilde\mL_p(\mathbb{R}^d):=\{f:\Vert f\Vert_{\tilde\mL_p}:=\sup_{z}\Vert \chi_r^zf\Vert_{\mL_p}<\infty\}$$
 where  $\chi_r(x):=\chi(\frac{x}{r})$ and  $\chi_r^z(x):=\chi_r(x-z)$, $\chi\in C_c^\infty(\mR^d)$ is a smooth function with $\chi(x)=1$ for $|x|\leq1$,  and $\chi(x)=0$ for $|x|>2$.  For further studies on these spaces we refer to \cite{XXZZ}. In the following, all derivatives should be interpreted in the weak sense.

We first state our result on the  existence of a global semi-flow for singular SDEs.
 \begin{theorem}
\label{sSDE}For $i=1,2$, assume $\frac{d}{p_i}<1$ with $p_i\in[2,\infty)$ and 
\begin{itemize}
\item[(1)] $|b|\in \tilde\mL_{p_1}(\mathbb{R}^d)$,  $|\nabla\sigma|\in \tilde\mL_{p_2}(\mathbb{R}^d)$,
\item [(2)] there exists some $K\geq 1$ such that for all $x\in\mathbb{R}^d$, $$K^{-1}|\xi|^2\leq\langle\sigma\sigma^*(x)\xi,\xi\rangle\leq K|\xi|^2,\quad \forall \xi\in\mathbb{R}^d,  $$  and $a:=\sigma\sigma^*$  is uniformly continuous in $x\in\mathbb{R}^d$.
    \end{itemize}
    Then  the SDE \eqref{sde01} admits a continuous global semi-flow.
 \end{theorem}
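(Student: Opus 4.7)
The plan is to combine an existing strong well-posedness theory for singular SDEs with the Zvonkin transformation to reduce the problem to a regularized SDE on which Kolmogorov's continuity theorem can be applied. First, under the integrability hypotheses $|b|\in\tilde{\mL}_{p_1}$, $|\nabla\sigma|\in\tilde{\mL}_{p_2}$ with $d/p_i<1$, together with uniform ellipticity and uniform continuity of $a=\sigma\sigma^*$, results such as those in \cite{XXZZ} (or \cite{XieZhang,Zhang2017}) guarantee that for each $(s,x)\in[0,\infty)\times\R^d$ the SDE \eqref{sde01} has a unique global strong solution $X_{s,t}^x$, $t\geq s$, and that the solution is non-explosive.

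Next, I would invoke the Zvonkin-type transformation: for sufficiently large $\lambda>0$, solve the vector-valued PDE
\begin{align*}
\tfrac{1}{2}\tr(a\nabla^2 u)+b\cdot\nabla u-\lambda u=-b
\end{align*}
in a suitable Sobolev space; the integrability assumption $d/p_1<1$ ensures $u\in C^1$ with $\|\nabla u\|_\infty<1/2$, so that $\Phi(x):=x+u(x)$ is a $C^1$-diffeomorphism of $\R^d$. By It\^o--Krylov's formula, $Y_t:=\Phi(X_t)$ satisfies an SDE with transformed coefficients $\tilde b,\tilde\sigma$ which are bounded and have the regularity needed for classical flow theory (Lipschitz-type estimates for $\tilde\sigma$ follow from $|\nabla\sigma|\in\tilde{\mL}_{p_2}$ via Krylov's estimate). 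Standard moment estimates then yield, for every $q>0$ and every bounded $0\leq s\leq t\leq T$,
\begin{align*}
\mE\bigl[\,|Y_{s,t}^x-Y_{s,t}^y|^q\bigr]\leq C_{T,q}\,|x-y|^q,
\end{align*}
uniformly in $s,t$. Combined with time-regularity via BDG, Kolmogorov's continuity theorem produces a modification $(s,t,y)\mapsto \tilde Y_{s,t}(y)$ which is jointly continuous, and I would define $\tilde\phi_{s,t}(x):=\Phi^{-1}\bigl(\tilde Y_{s,t}(\Phi(x))\bigr)$. Joint continuity of $\tilde\phi_{s,t}(x)$ in $(s,t,x)$ then follows from the corresponding property of $\tilde Y$ and the homeomorphism $\Phi$.

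It remains to verify the semi-flow identity. For each fixed triple $0\leq s\leq t\leq u$ and each $x\in\mathbb{R}^d$, strong uniqueness gives $\tilde\phi_{s,u}(x)=\tilde\phi_{t,u}(\tilde\phi_{s,t}(x))$ almost surely; running over rationals $x\in\mathbb{Q}^d$ and rational times, then using joint continuity in $(s,t,x)$ of both sides, one removes the dependence of the exceptional null set on $s,t,u,x$ and obtains identity (iv) of the definition of a continuous global semi-flow for every $\omega$ outside a single $\mP$-null set, which can be discarded by redefining $\tilde\phi$ on it to be the identity. Properties (ii), (iii), (v) are immediate from the construction, and $\Theta\equiv\infty$ follows from the non-explosion established in Step 1.

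The main obstacle is obtaining the $q$-th moment Lipschitz estimate for $\tilde Y$ uniformly over initial times $s$: because the drift of the transformed equation depends on $\nabla u$ (hence on Krylov-type bounds) rather than on a classical Lipschitz constant, one must control the stability of solutions using $L^p$-Khasminskii-type inequalities and the Krylov estimate for singular SDEs. Once this quantitative stability is in hand, Kolmogorov's theorem and the uniqueness-plus-continuity argument for the flow identity are routine, and the conclusion follows.
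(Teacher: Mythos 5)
Your proposal follows essentially the same route as the paper: a Zvonkin transformation reducing \eqref{sde01} to a transformed SDE with bounded coefficients, moment stability estimates imported from \cite{XXZZ}, Kolmogorov's continuity theorem for a jointly continuous modification in $(s,t,x)$, and strong uniqueness to upgrade the almost-sure flow identity to a perfect one. The only cosmetic difference is that you use the elliptic resolvent form of the Zvonkin PDE while the paper uses the backward parabolic version on $[0,T]$, and the paper additionally spells out the $|s-s'|^{\alpha'/2}$ increment estimate in the initial time via conditioning on $\mathcal{F}_{s'}$, a step you subsume under "time-regularity via BDG."
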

 \begin{proof}
 The result follows essentially  by combining and extending \cite{FF}, \cite{AVS}, and \cite[Theorem 1.1]{XXZZ} (the latter paper establishes weak differentiability of the solution map with respect to the spatial variable). 
 We therefore simply give an outline of the proof.  For fixed $T>0$, we define the following  Zvonkin transformation map
\begin{align}\label{pdephi}\Phi^b(t,x):=x+U_b(t,x) \text{ for } (t,x)\in[0,T]\times\mathbb{R}^{d},\end{align}
where  $U_b:=(u_b^{(l)})_{1\leq l\leq d}$ and $u_b^{(l)}$ are the unique solutions to the equations
$$
\partial_tu^{(l)}+\frac{1}{2}\sum_{i,j=1}^da_{ij}\partial_{ij}^2u^{(l)}+b\cdot\nabla u^{(l)}+b^{(l)}=\lambda u^{(l)}, \quad t\in[0,T],\quad u^{(l)}(T,x)=0,\quad l=1,\cdots,d
$$
such that $\partial_tu_b^{(l)},\nabla u_b^{(l)},\nabla^2u_b^{(l)}, u_b^{(l)}$ all are belonging to $\mL_q([0,T],\tilde\mL_{p_1}(\mathbb{R}^d))$ for some $q\in[2,\infty)$ and $\frac{d}{p_1}+\frac{2}{q}<1$.
Then it is known that for sufficiently large $\lambda>0$ the map $\Phi^b(t, \cdot)$ is  a $\mathcal{C}^1$-diffeomorphism on $\mathbb{R}^d$ (see, e.g.,  \cite[p.5205]{XXZZ}). Therefore  it is enough to show for the following transformed equation
 \begin{align}\label{SDEY}\dif Y_t=\tilde b(t,Y_t)\,\dif t+\tilde\sigma(t,Y_t)\,\dif W_t, \quad Y_0=y\in\mathbb{R}^d
  \end{align}
  with
  $$\tilde b(t,x):=\lambda U_b(t,\Psi^b(t,x)),\tilde\sigma(t,x):=[\nabla\Phi^b\cdot\sigma]\circ({\Psi^b}(t,x)),\Psi^b:=(\Phi^b)^{-1},\quad y=\Phi^b(0,x_0),$$
that there is a flow $(s,t,y,\omega)\mapsto \zeta_{s,t}(y,\omega)$, defined for $0\leq s\leq t\leq T$, $y\in\mathbb{R}^d$ and $\omega\in\Omega$ with values in $\mathbb{R}^d$, such that for any $s\in[0,T]
$\begin{itemize}
\item[(1')] for any $y\in\mathbb{R}^d$, the process $Y^{s,y}=\{Y^{y}_{s,t}:=\zeta_{s,t}(y):t\in[s,T]\}$ is a continuous solution of \eqref{SDEY},
\item[(2')]  $\zeta_{s,t}(y)=\zeta_{u,t}(\zeta_{s,u}(y))$ for all $0\leq s\leq u\leq t\leq T$ and $y\in\mathbb{R}^d$ and $\zeta_{s,s}(y)=y,$
\item[(3')]  for $\alpha\in(0,1)$  there exists  $C(\alpha, \omega,T)>0$ such that for any $x,y\in\mathbb{R}^d$  and $0\leq s\leq t\leq T$,
    $$|\zeta_{s,t}(x)-\zeta_{s,t}(y)|\leq C|x-y|^\alpha.$$
\end{itemize}
By  \cite[Theorem 1.1 (B)]{XXZZ} we know that there exists a unique global strong solution $(\mY_{s,t}^y)_{t\geq s}$ to \eqref{SDEY} with $\mY_s=y\in\mathbb{R}^d$. For property (3') we use the same argument from \cite[(4.10)]{XXZZ} to get for any $\alpha'\geq 2$ there exists a constant $C=C(\alpha',T)$ such that for any $x,y\in\mR^d$ such that
\begin{align*}
\mE\sup_{t\in[0,T]}|\mY_{0,t}^x-\mY_{0,t}^y|^{\alpha'}\leq C|x-y|^{\alpha'}.
\end{align*}
Furthermore, by the boundedness of $\tilde b$ and $\tilde \sigma$ (see \cite[Proof of Theorem 1.1]{XXZZ}), we can easily obtain for $0\leq s\leq t\leq t'\leq T$,
\begin{align*}
\sup_{x\in\mR^d}\mE|\mY_{s,t'}^x-\mY_{s,t}^x|^{\alpha'}\leq C_{\alpha'}|t'-t|^{\frac{\alpha'}{2}}.
\end{align*}
Therefore for all $x,y\in\mR^d$ and all $0\leq s\leq t\leq T$, $0\leq s'\leq t'\leq T$, there exist positive constants $c_{\alpha}'$ and $c_{\alpha}''$ such that
\begin{align*}
\mE|\mY_{s,t}^x-\mY_{s',t'}^y|^{\alpha'}\leq & c_\alpha'\Big(\mE|\mY_{s,t}^x-\mY_{s,t}^y|^{\alpha'}+\mE|\mY_{s,t}^y-\mY_{s,t'}^y|^{\alpha'}+\mE|\mY_{s,t'}^y-\mY_{s',t'}^y|^{\alpha'}\Big)
\\\leq &c_\alpha''(|x-y|^{\alpha'}+|t-t'|^{\frac{\alpha'}{2}}+|s-s'|^{\frac{\alpha'}{2}}).
\end{align*}
The last inequality holds because for all $x\in\mR^d$, for all $s, s'\leq t$,  w.l.o.g. assuming $s\leq s'$, 
\begin{align*}
   \mE|\mY_{s',t}^x-\mY_{s,t}^x|^{\alpha'}= &\mE|\mY_{s',t}^x-\mY_{s',t}^{\mY_{s,s'}^x}|^{\alpha'}
   \\=&\mE[\mE|\mY_{s',t}^x-\mY_{s',t}^{\mY_{s,s'}^x}|^{\alpha'}|\mathcal{F}_{s'}]
   \\\leq & C\mE[|\mY_{s,s'}^x-x|^{\alpha'}]\leq C|s-s'|^{\frac{\alpha'}{2}}.
\end{align*}
Finally  the existence of the uniformly H\"older continuous flow $\zeta_{s,t}$ with property (3') follows from the Kolmogorov continuity theorem (for detailed argument, see e.g. \cite[proof of Proposition 5.2.]{SS}). Then for such $\zeta_{s,t}$ property (1') holds obviously.  The uniqueness of strong solution to \eqref{SDEY} implies (2'), see, e.g. \cite[Theorem 1.1]{XXZZ}.
 \end{proof}
 \begin{remark}
   We mention that the global semi-flow $\psi$ in Theorem \ref{sSDE} enjoys the additional property that $\psi_{s,t}(.,\omega):\mathbb{R}^d\rightarrow\mathbb{R}^d$ is a homeomorphism. This  can be shown in the same way as in the proof of \cite[Theorem 5.10]{FF} (for equations with additive noise): first  observe that it holds for  the semi-flow $\zeta$ generated by the transformed SDE \eqref{SDEY} and then, by properties  of the Zvonkin transformation,  it also holds for $\psi$.
 \end{remark}
 \begin{example}
  Here are two typical examples which satisfy the conditions in Theorem \ref{sSDE}.
 \begin{itemize}
     \item[(1)] $|b|\in\mL_{p_1}(\mathbb{R}^d),|\nabla \sigma|\in\mL_{p_2}(\mathbb{R}^d)$ with $p_i\in[2,\infty)$ such that $\frac{d}{p_i}<1$, $i=1,2$,  $\sigma$ further satisfies (2) in Theorem \ref{sSDE}.
     \item[(2)]  $b$ and $|\nabla \sigma|$ are bounded (see \cite[Remark 1.2]{XXZZ}), $\sigma$ further satisfies (2) in Theorem \ref{sSDE}. \\
     This case may be compared with the example at the end of Remark \ref{rmnt} in the previous section in which $b$ and $\sigma$ were also assumed to be bounded. Notice however that the additional assumptions were different: in Remark \ref{rmnt} we assumed a monotonicity condition to hold while here we assume non-degeneracy of the diffusion.  
 \end{itemize}
 \end{example}
 \begin{theorem}\label{lsSDE}
 For any $n\in\mathbb{N}$ and some $p_n\in[2,\infty)$ such that $\frac{d}{p_n}<1$, assume 
\begin{itemize}
\item[(1)] $|b|\in \tilde\mL_{p_n}(B_n)$ where $B_n:=\{x\in\mathbb{R}^d:|x|\leq n\}$;
\item [(2)] $|\nabla\sigma|\in \tilde\mL_{p_n}(B_n)$ and there exists $K_n\geq 1$ such that for all $x\in B_n$, $$K_n^{-1}|\xi|^2\leq\langle\sigma\sigma^*(x)\xi,\xi\rangle\leq K_n|\xi|^2,\quad \forall \xi\in\mathbb{R}^d,  $$  and $a:=\sigma\sigma^*$  is uniformly continuous on $B_n$.
    \end{itemize}
    Then  the SDE \eqref{sde01} admits a continuous local semi-flow.
 \end{theorem}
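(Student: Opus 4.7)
The plan is a standard localization. For each $n\in\mathbb{N}$ I will construct globally defined coefficients $(b^{(n)},\sigma^{(n)})$ that coincide with $(b,\sigma)$ on $B_n$ and satisfy the hypotheses of Theorem \ref{sSDE} on all of $\mathbb{R}^d$, then apply Theorem \ref{sSDE} to obtain a continuous global semi-flow $\phi^{(n)}$ for each $n$, and finally glue these together using the first exit times from $B_n$ to produce the desired $(\phi,\Theta)$.

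For the localization of coefficients, pick a smooth cutoff $\eta_n\in C_c^\infty(\mathbb{R}^d)$ with $\eta_n\equiv 1$ on $B_n$ and $\eta_n\equiv 0$ outside $B_{n+1}$ and set $b^{(n)}:=\eta_n b$, so that $|b^{(n)}|\in\tilde\mL_{p_n}(\mathbb{R}^d)$. Since $p_n>d$ and $|\nabla\sigma|\in\tilde\mL_{p_n}(B_n)$, the Sobolev embedding $W^{1,p_n}\hookrightarrow C^{0,\alpha}$ combined with the ellipticity and uniform continuity of $a=\sigma\sigma^*$ yields a continuous bounded representative of $\sigma$ on $B_n$. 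Define
\begin{align*}
a^{(n)}(x):=\eta_n(x)a(x)+(1-\eta_n(x))I,
\end{align*}
which as a convex combination of uniformly positive definite matrices is uniformly elliptic and uniformly continuous on $\mathbb{R}^d$, with $|\nabla a^{(n)}|\in\tilde\mL_{p_n}(\mathbb{R}^d)$. Use the polar decomposition $\sigma=a^{1/2}U$ on $B_n$ with $U:=a^{-1/2}\sigma$ orthogonal, extend $U$ continuously to $\mathbb{R}^d$ with $U$ equal to a fixed orthogonal matrix outside $B_{n+1}$, and set $\sigma^{(n)}:=(a^{(n)})^{1/2}U$; then $\sigma^{(n)}(\sigma^{(n)})^*=a^{(n)}$, $\sigma^{(n)}=\sigma$ on $B_n$, and $|\nabla\sigma^{(n)}|\in\tilde\mL_{p_n}(\mathbb{R}^d)$, so the hypotheses of Theorem \ref{sSDE} are met.

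Apply Theorem \ref{sSDE} to obtain continuous global semi-flows $\phi^{(n)}$. Define the exit times $\tau_n(s,x,\omega):=\inf\{t\geq s:\phi^{(n)}_{s,t}(x,\omega)\notin B_n\}$; each $\tau_n$ is lower semi-continuous in $(s,x)$ for every $\omega$ by continuity of $\phi^{(n)}$. For $m>n$, the coefficients $(b^{(m)},\sigma^{(m)})$ agree with $(b^{(n)},\sigma^{(n)})$ on $B_n$, so pathwise uniqueness (implicit in Theorem \ref{sSDE} via Zvonkin's transform) forces $\phi^{(m)}_{s,\cdot}(x)=\phi^{(n)}_{s,\cdot}(x)$ on $[s,\tau_n(s,x,\omega))$ outside a null set depending on $(s,x,n,m)$; continuity in $x$, countability over $(s,x,n,m)\in\mathbb{Q}_+\times\mathbb{Q}^d\times\mathbb{N}^2$ and right continuity in $s$ consolidate these into a single null set $N$ outside which the family $(\phi^{(n)})$ is compatible. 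For $\omega\notin N$ set $\Theta(s,x,\omega):=\sup_n\tau_n(s,x,\omega)$ and $\phi_{s,t}(x,\omega):=\phi^{(n)}_{s,t}(x,\omega)$ for any $n$ with $t<\tau_n(s,x,\omega)$, and on $N$ define $\phi,\Theta$ trivially. Lower semi-continuity of $\Theta$ and continuity of $\phi$ in $x$ descend from the $\tau_n$ and $\phi^{(n)}$, the semi-flow identity follows from that of each $\phi^{(n)}$ on $[s,\tau_n)$, and the blow-up $\limsup_{t\to\Theta(s,x)}|\phi_{s,t}(x)|=\infty$ on $\{\Theta<\infty\}$ holds because $|\phi^{(n)}_{s,\tau_n}(x)|=n\to\infty$.

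The main obstacle is the first step: constructing $\sigma^{(n)}$ that \emph{pathwise} equals $\sigma$ on $B_n$ while keeping $\sigma^{(n)}(\sigma^{(n)})^*$ globally uniformly elliptic, uniformly continuous, and with weak gradient in $\tilde\mL_{p_n}(\mathbb{R}^d)$; naively interpolating $\sigma$ itself to a constant outside $B_{n+1}$ destroys ellipticity, so one interpolates at the level of $a$ and uses the polar-decomposition argument, which itself requires that $\sigma$ admit a continuous representative on $B_n$—supplied by the Sobolev embedding because $p_n>d$.
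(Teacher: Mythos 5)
Your overall strategy is exactly the paper's: truncate the coefficients so that they agree with $(b,\sigma)$ on $B_n$ and satisfy the hypotheses of Theorem \ref{sSDE} globally, obtain global semi-flows $\phi^{(n)}$, and patch them together along the first exit times from $B_n$; the gluing step you describe (consistency via pathwise uniqueness, consolidation of null sets by continuity, $\Theta=\sup_n\tau_n$) is the same argument the paper imports from \cite[Proof of Theorem 2.4]{SS}. The one place where you genuinely diverge is the localization of $\sigma$, and that is also where your argument has a real gap. You extend the orthogonal polar factor $U=a^{-1/2}\sigma$ from $B_n$ to all of $\R^d$ so that it equals a fixed orthogonal matrix outside $B_{n+1}$, but such an extension is not automatic: a continuous map from $\partial B_n$ into $O(d)$ extends over the annulus to a constant only if it is null-homotopic (for $d=2$ this is a winding-number obstruction in $SO(2)\cong S^1$). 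Here the obstruction does vanish --- $U$ is defined and continuous on the whole ball $B_n$, so its boundary restriction extends over the disk and is therefore null-homotopic --- but you never invoke this, and you would additionally have to check that the extension can be chosen with $|\nabla U|\in\tilde\mL_{p_n}(\R^d)$ so that $|\nabla\sigma^{(n)}|$ retains the required integrability. As written, the step you yourself flag as ``the main obstacle'' is asserted rather than proved.

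The paper sidesteps all of this with a simpler device: it sets $b_n=b\,\mathbbm{1}_{B_n}$ and
$\sigma_n(x)=\sigma\big(z_{n+1}+\chi_n(x)(x-z_{n+1})\big)$ for a cutoff $\chi_n$ equal to $1$ on $B_n$ and $0$ off $B_{n+1}$ and a point $z_{n+1}\in B_{n+1}$. The inner map is smooth, equals the identity on $B_n$, and sends all of $\R^d$ into $\overline{B_{n+1}}$ by convexity, so $\sigma_n$ only takes values that $\sigma$ already takes on $\overline{B_{n+1}}$; uniform ellipticity, uniform continuity of $a$, and the $\tilde\mL_{p}$ bound on the gradient are then inherited directly from the hypotheses at level $n+1$ via the chain rule, with no reconstruction of $\sigma$ from $a$ and no extension problem for the orthogonal factor. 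If you want to keep your interpolation of $a$ with the identity matrix, you should add the null-homotopy observation and the Sobolev-regularity check for the extension of $U$; otherwise the composition trick is the cleaner route.
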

 \begin{proof}
We adapt the localization technique (e.g. see \cite[Proof of Theorem 1.3]{Zhang2011}, \cite[Proof of Theorem 2.4]{SS}) to show the existence of a local semi-flow. Denote for $x\in\mathbb{R}^d$,
\begin{align*}
b_n(x)&=b(x)\mathbbm{1}_{B_n}(x),\\
\sigma_n(x)&=\sigma\Big(z_{n+1}+\chi_n(x)(x-z_{n+1})\Big), \text{  for some }z_{n+1}\in B_{n+1},
\end{align*}
where  $\chi_n:\mathbb{R}^{d}\rightarrow[0,1]$ is a  nonnegative smooth function   such that $\chi_n(y)=1$ for all $y\in  B_n$ and $\chi_n(y)=0$ for all $y\notin  B_{n+1}$, $B_{n}:=\{x\in\mR^d:|x|<n\}$, $n\in\mN$.  It can be verified that such $b_n$ and $\sigma_n$ fulfill the conditions in Theorem \ref{sSDE}.  Therefore there exists a global continuous semi-flow $\psi^n$ for each $n\in\mathbb{N}$ following from Theorem \ref{sSDE}. 
We define 
\begin{align*}
    \Theta(s,x,\omega):&=\lim_{n\rightarrow\infty}\{t\geq s:|\psi_{s,t}^n(x)|\geq n\},
    \\\psi_{s,t}(x,\omega):&=\psi_{s,t}^n(x) \text{ for } t\in[s, \Theta(s,x,\omega)).
\end{align*}
It is clear that $(\psi,\Theta)$ is a local semi-flow of \eqref{sde01} following the argument from  \cite[Proof of Theorem 2.4]{SS}.
\end{proof}

 \begin{remark}
  Since our main interest are SDEs generating an RDS, we study singular SDEs with time homogeneous   coefficients only. For the existence of a semi-flow, actually the above local/global results hold also when the coefficients are  not time homogeneous. Then the required conditions are:  $|b|\in \mL_{q_1}([0,T],\tilde\mL_{p_1}(\mathbb{R}^d))$, $|\nabla\sigma|\in \mL_{q_2}([0,T],\tilde\mL_{p_2}(\mathbb{R}^d))$ for some $p_i,q_i\in[2,\infty)$ with $\frac{d}{p_i}+\frac{2}{q_i}<1$ (known as 'LPS' condition), $i=1,2$  for the existence of a global semi-flow.  For the existence of local semi-flow we assume $|b|,|\nabla\sigma| \in \mL_{q_n}([0,n],\mL_{p_n}(B_n))$ for $p_n,q_n\in[2,\infty)$ with $\frac{d}{p_n}+\frac{2}{q_n}<1$, $n\in\mN$. In both cases  it is necessary to assume the uniform ellipticity condition to hold globally and locally respectively. 
 \end{remark}
 With Theorems \ref{sSDE} and \ref{lsSDE} at hand, we are ready to apply Theorem \ref{SDERDS} to 
 conclude the following result.

\begin{corollary}
Under the assumptions of Theorem \ref{sSDE} respectively \ref{lsSDE}, equation \eqref{sde01} generates 
a  global respectively a local RDS.
\end{corollary}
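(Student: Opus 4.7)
The strategy is simply to apply Theorem \ref{SDERDS} to the SDE \eqref{sde01} after rewriting it in the driving-semimartingale form used there. Setting $Z^0_t := t$ and $Z^k_t := W^k_t$ for $k=1,\dots,d$, we rewrite \eqref{sde01} as
\begin{equation*}
\dif X_t = b(X_t)\,\dif Z^0_t + \sum_{k=1}^d \sigma_{\cdot k}(X_t)\,\dif Z^k_t,
\end{equation*}
where $\sigma_{\cdot k}$ denotes the $k$th column of $\sigma$. The processes $Z^0,Z^1,\dots,Z^d$ are continuous (hence c\`adl\`ag) semimartingales with stationary increments in the sense required by Theorem \ref{SDERDS}: Brownian motion has independent and stationary increments, and $Z^0_{t+h}-Z^0_t = h$ is deterministic.

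Next I would verify that \eqref{sde01} has a unique strong local solution admitting a continuous (local) semi-flow. Under the hypotheses of Theorem \ref{sSDE}, that theorem provides a continuous global semi-flow $\psi$, and uniqueness of strong solutions is a by-product of its proof via Zvonkin's transformation (it is essentially \cite[Theorem 1.1]{XXZZ} applied to the transformed equation \eqref{SDEY}). Under the hypotheses of Theorem \ref{lsSDE}, that theorem yields a continuous local semi-flow obtained by patching together the globally well-posed cut-off equations with coefficients $(b_n,\sigma_n)$, and the explosion time $\Theta$ is defined as the limit of the first exit times from the balls $B_n$; uniqueness of the strong local solution follows from the strong uniqueness for each cut-off equation on time intervals preceding the relevant exit time.

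With these pieces in hand, Theorem \ref{SDERDS} applies directly and produces a continuous global RDS in the setting of Theorem \ref{sSDE} and a continuous local RDS in the setting of Theorem \ref{lsSDE}. There is no genuine obstacle in this argument; the only point deserving attention is the reconciliation between the notion of unique strong local solution demanded by the definition preceding Theorem \ref{SDERDS} (which requires $\limsup_{t\to\tau}|X_t|=\infty$ on $\{\tau<\infty\}$) and the explosion time built in the proof of Theorem \ref{lsSDE}, but this identification is immediate from the construction $\Theta(s,x,\omega) = \lim_{n\to\infty}\inf\{t\ge s:|\psi^n_{s,t}(x)|\ge n\}$ and the fact that, before this time, $\psi_{s,t}(x)$ coincides with the solution of one of the cut-off SDEs.
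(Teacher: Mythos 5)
Your proposal is correct and follows essentially the same route as the paper: the corollary is obtained by feeding the continuous global (respectively local) semi-flow from Theorem \ref{sSDE} (respectively Theorem \ref{lsSDE}) into Theorem \ref{SDERDS}, which is all the paper does. The details you supply, namely writing the equation with drivers $Z^0_t=t$ and $Z^k_t=W^k_t$ and checking uniqueness of the strong local solution including the identification of the explosion time, are exactly the steps the paper leaves implicit.
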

 
\section*{Acknowledgments}
\footnotesize Financial support for C. Ling by the DFG through the research unit (Forschergruppe) FOR 2402 is acknowledged.


\begin{thebibliography}{999}
  \bibitem{A} L. Arnold: {\it Random Dynamical Systems}, Springer, Berlin (1998).
  \bibitem{AS} L. Arnold and M. Scheutzow: Perfect cocycles through stochastic differential equations. {\it Probab. Theory Relat. Fields} {\bf 101} (1995) 65-88.
   \bibitem{C} D. Cohn: {\it Measure Theory}, second edition, Springer, Berlin (2013). 
 \bibitem{DM} C. Dellacherie and P.A. Meyer: {\it Probabilities and Potential}. North-Holland, Amsterdam (1978).
 \bibitem{FF}E. Fedrizzi  and F. Flandoli: H\"older flow and differentiability for SDEs with nonregular drift. {\it Stoch. Anal. Appl.} {\bf {31}} (2013) 708-736.
\bibitem{FGS} F. Flandoli, B. Gess and M. Scheutzow: Synchronization by noise. {\it Probab. Theory Relat. Fields} {\bf {168}} (2017) 511-556.
 \bibitem{KS} G. Kager and M. Scheutzow: Generation of one-sided random dynamical systems by stochastic differential equations. {\it Electronic J. Probab.} {\bf 2} (1997) 1-8.
 \bibitem{KR}   N. V. Krylov  and M. R\"ockner: Strong solutions of stochastic equations with singular time dependent drift.
		{\it Probab. Theory Relat. Fields} {\bf{131}} (2005) 154-196.
\bibitem{K} H. Kunita: {\it Stochastic Flows and Stochastic Differential Equations}, Cambridge University Press, Cambridge (1990).
\bibitem{LS} X.-M. Li and  M. Scheutzow: Lack of strong completeness for stochastic flows. {\it Ann.  Probab.} {\bf 39}  (2011) 1407-1421. 
 \bibitem{LWMR} W. Liu and M. R\"ockner: Stochastic Partial Differential Equations: An Introduction. Springer, Cham (2015).
       \bibitem{MoS} S. Mohammed and  M. Scheutzow: Lyapunov exponents of linear stochastic functional differential equations driven by semimartingales. I. The multiplicative ergodic theory. {\it Ann. Inst. H. Poincaré Probab. Statist.} {\bf 32}  (1996) 69–105. 
\bibitem{Pro} P. Protter : Semimartingales and measure preserving flows. {\it Ann. Inst. H. Poincaré Probab. Statist.} {\bf 22} (1986), 127-147.
              \bibitem{MS} M. Scheutzow: On the perfection of crude cocyles. {\it Random Comput. Dynam.} {\bf 4} (1996) 235-255.
      \bibitem{SS} M. Scheutzow and S. Schulze: Strong completeness and semi-flows for stochastic differential equations with monotone drift. {\it J. Math. Anal. Appl.}  {\bf 446} (2017) 1555-1570.

 \bibitem{AVS}A. V.  Shaposhnikov: Some remarks on Davie's uniqueness theorem. {\it Proc. Edinb. Math. Soc.} {\bf{59}} (2016) 1019--1035.
 \bibitem{VE} A. Yu.  Veretennikov: On the strong solutions of stochastic differential equations. {\it Theory Probab. Appl.}  {\bf 24} (1979), 354-366.
  \bibitem{XXZZ}P. Xia, L.  Xie,  X. Zhang   and G. Zhao: $L^q(L^p)$-theory of stochastic differential equations. {\it Stochastic Process. Appl.} {\bf 130} (2020) 5188-5211.
      \bibitem{XieZhang} L.  Xie and   X. Zhang: Sobolev differentiable flows of SDEs with local Sobolev and super-linear growth coefficients. {\it Ann. Probab.} {\bf 22} (2016) 3661-3687.
\bibitem{Zhang2011} X. Zhang: Stochastic homeomorphism flows of SDE with singular drifts and Sobolev diffusion coefficients.  {\it Electron. J. Probab.} {\bf 16} (2011) 1096-1116.
    \bibitem{Zhang2017} X. Zhang: Stochastic differential equations with Sobolev diffusion and singular drift and applications. {\it Ann. Appl. Probab.} {\bf 26} (2016) 2697-2732.
\bibitem{Z} R. Zimmer: {\it Ergodic Theory and semisimple Groups}. Birkh\"auser, Basel (1984).
\bibitem{Zv}A. K.  Zvonkin: A transformation of the phase space of a diffusion process that removes the drift. {\it Math. Sbornik} {\bf 135} (1974) 129-149.
\end{thebibliography}
	\end{document}